\newtheorem{prop}{Proposition}[subsection]
\newtheorem{lem}[prop]{Lemma}
\newtheorem{theo}{Theorem}
\newtheorem*{theo2}{Theorem}
\newtheorem{defin}{Definition}
\newtheorem{rmq}{Remark}
\newcommand{\T}[1]{\, ^t\! #1}
\newcommand{\cg}{[\kern-0.15em [}
\newcommand{\cd}{]\kern-0.15em]}
\newcommand{\E}{\mathbb{E}}
\newcommand{\Prob}{\mathbb{P}}
\newcommand{\R}{\mathbf{R}}
\newcommand{\N}{\mathbf{N}}
\newcommand{\Z}{\mathbf{Z}}
\newcommand{\dd}{\mathrm{d}}
\title{Monotonicity and phase transition for the VRJP and the ERRW}
\author{R\'emy Poudevigne--Auboiron}
\date{} 
\begin{document}
\maketitle

\begin{abstract}
The vertex-reinforced jump process (VRJP), introduced by Davis and Volkov in \cite{VRJPIntro}, is a continuous-time process that tends to come-back to already visited vertices. It is closely linked to the edge-reinforced random walk (ERRW) introduced by Coppersmith and Diaconis in 1986 (\cite{linreinforced}) which is more likely to cross edges it has already crossed. On $\Z^d$ for $d\geq 3$, both models where shown to be recurrent for small enough initial weights (\cite{ERRWVRJP},\cite{renfexp}) and transient for large enough initial weights (\cite{ERRWTrans},\cite{ERRWVRJP}). We show through a coupling of the VRJP for different weights that the VRJP (and the ERRW) exhibits some monotonicity. In particular, we show that increasing the initial weights of the VRJP and the ERRW makes them more transient which means that the recurrence/transience phase transition is necessarily unique. Furthermore, by making the weights go to infinity, we show that the recurrence of the ERRW and the VRJP is implied by the recurrence of a random walk in deterministic electrical network.
\end{abstract}
\section{Introduction and results}
\subsection{Introduction}

The edge-reinforced random walk (ERRW) was first introduced by Coppersmith and Diaconis in 1986 $\cite{linreinforced}$. In this model, the more the walk crosses an edge, the likelier it is to cross it again in the future. This model was shown to be a random walk in random reversible environments $(\cite{ERRWMixture},\cite{ERRWMixture2})$. This representation lead to several results on this model, first recurrence and transience on trees depending on the reinforcement $\cite{Pemantle}$ then recurrence on the ladder $\cite{ERRWLadder}$ and $\Z\times G$ $\cite{ERRWLadder2}$ for large enough reinforcement and on a modification of $\Z^2$ for large enough reinforcement $\cite{lerrwrec2d}$. It was then shown by two different techniques that the ERRW on $\Z^d$ is recurrent for large enough reinforcement (in \cite{renfexp} by Angel, Crawford and Kozma and in \cite{ERRWVRJP} by Sabot and Tarr\`es). The technique used in $\cite{ERRWVRJP}$ was based on a link between the ERRW, the vertex-reinforced jump process (VRJP, introduced by Davis and Volkov in \cite{VRJPIntro}) and the super-symmetric hyperbolic sigma model (introduced in the context of random band matrices in \cite{SUSY1},\cite{SUSY2} by Zirnbauer, Disertori and Spencer). This relation led to several other results for both the ERRW and the VRJP: the transience and a CLT in dimension 3 and higher for small enough reinforcements (\cite{ERRWTrans},\cite{ERRWVRJP},\cite{VRJPNu}), a $0-1$ law for recurrence on $\Z^d$ \cite{VRJPNu} and the recurrence in dimension 2 (\cite{VRJPNu},\cite{lerrwrec2d},\cite{RecVRJPZ2}). This means that on the one hand, for $d\in \{1,2\}$ the ERRW and the VRJP are recurrent for any reinforcement. On the other hand, for $d\geq 3$ both the ERRW and the VRJP are recurrent for large enough reinforcement and transient for small enough reinforcements. We know that in-between, the VRJP and the ERRW are recurrent or transient but it was not known whether there is a unique phase transition. In this paper we show that we can couple the VRJP for different weights (more precisely, we couple the $\beta$-field associated to the VRJP that was introduced in \cite{VRJPNu0}). This coupling leads to a monotonicity for the VRJP similar to the Rayleigh monotonicity for electrical networks. This gives us the uniqueness of the recurrence/transience phase transition for the VRJP and the ERRW in dimension $3$ and higher. This monotonicity can also be used to show that the VRJP and the ERRW with constant weights are recurrent on recurrent graphs by seeing random walks in electrical networks as VRJPs with infinite weights.

\subsection{Statement of the results}
Let $\mathcal{G}=(V,E)$ be a locally finite, non-directed graph. To every edge $e\in E$ we associate a positive weight $a_e$. Let $x_0\in V$ be a vertex of $\mathcal{G}$. The edge-reinforced random walk $Y$ starting from $x_0$ is the random process which takes its values in $V$ defined by: 
\[
Y_0=x_0 \text{ a.s, and}
\]
\[
\Prob\left(Y_{n+1}=y|Y_0,\dots,Y_n\right)=1_{y\sim Y_n} \frac{a_{\{Y_n,y\}}+Z_n(\{Y_n,y\})}{\sum\limits_{z\sim Y_n}a_{\{Y_n,z\}}+Z_n(\{Y_n,z\})},
\]
where the random variables $(Z_n)_{n\in \N}$ are defined by:
\[
\forall e\in E,\ Z_n(e)=\sum\limits_{i=0}^{n-1}1_{\{Y_i,Y_{i+1}\}=e}.
\]
If the graph is $\Z^d$, this process can exhibit different behaviours depending on the initial weights. For small enough initial weights it is recurrent.
\begin{theo2}[Theorem 1 of \cite{renfexp} and corollary 2 of \cite{ERRWVRJP}]
For any $K$ there exists $a_0>0$ such that if $\mathcal{G}$ is a graph with all degrees bounded by $K$, then the linearly edge reinforced random walk on $\mathcal{G}$ with initial weights $a\in(0,a_0)$ is positive recurrent.
\end{theo2}
For large enough initial weights, the process is transient.
\begin{theo2}[Theorem 1 of \cite{ERRWTrans}]
On $\Z^d$, $d\geq 3$, there exists $a_c(d)>0$ such that, if $a_e>a_c(d)$ for all $e\in E$, then the ERRW with weights $(a_e)_{e\in E}$ is transient a.s.
\end{theo2}
Note that the previous two theorems use results or ideas of \cite{SUSY2} and \cite{SUSY3}. The ERRW is linked to an other random process, the vertex-reinforced jump process (VRJP). The VRJP on a locally finite graph $\mathcal{G}=(V,E)$ is the continuous-time process $(\tilde{Y}_t)_{t\in\R^+}$ that starts at some vertex $x_0$ and that, conditionally on the past at time $t$, if $\tilde{Y}_t=x$, jumps to a neighbour $y$ of $x$ at rate 
\[
W_{\{x,y\}}\ell_x(t),
\]
where
\[
\ell_x(t):=\int\limits_{0}^t 1_{\tilde{Y}_s=x}\dd s.
\]
The following link between the ERRW and the VRJP has been shown in $\cite{ERRWVRJP}$.
\begin{theo2}[Theorem 1 of \cite{ERRWVRJP}]
The ERRW with weights $(a_e)_{e\in E}$ is equal in law to the discrete time process associated with a VRJP in random independent weights $W_e \sim \text{Gamma}(a_e,1)$.
\end{theo2}

In this article we show, through a coupling, that the VRJP has a property similar to Rayleigh's monotonicity for electrical network. This leads to several results for recurrence and transience. First, we show that the probability that the walk is recurrent is decreasing in the parameters of the VRJP. This is a corollary of our main theorem that will be stated at the end because it is technical and needs a few additional definitions.

\begin{theo}\label{theo:TransCroiss}
Let $\mathcal{G}=(V,E)$ be an infinite, non-directed, connected graph without loops or multiple edges and $0\in V$ a vertex in this graph. Let $(W_e^-)_{e\in E}$ and $(W_e^+)_{e\in E}$ be two families of positive weights such that for any $e\in E$, $0<W_e^-\leq W_e^+$. The probability that the VRJP with initial weights $W^-$ is recurrent is greater or equal than the probability that the VRJP with initial weights $W^+ $ is recurrent.
\end{theo}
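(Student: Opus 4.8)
\medskip
\noindent\textbf{Proof strategy (sketch).}
The plan is to pass to the random-potential ($\beta$-field) representation of the VRJP of Sabot--Tarr\`es--Zeng and to construct, for $W^-\le W^+$, a coupling of the two random environments under which recurrence of the $W^+$-environment forces recurrence of the $W^-$-environment; the stated inequality then follows at once by comparing marginals.

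\emph{Step 1 (recurrence criterion).} By the STZ representation and its infinite-volume extension, the VRJP on $\mathcal{G}$ with weights $W$ started at $0$ is, after a time change, a random walk in random conductances $c^\beta_{xy}$ depending on $W_{xy}$ and on a positive random field $\psi^\beta$; here $\beta$ is the infinite-volume $\beta$-field, obtained as a weak limit of the finite-volume fields $\nu^{W}_{V_n}$ along an exhaustion $V_n\uparrow V$, and $\psi^\beta$ is the associated positive function harmonic for the random operator $H_\beta$ (diagonal $2\beta$, off-diagonal $-W$), normalised by $\psi^\beta_0=1$. The VRJP is recurrent iff this random walk is recurrent, i.e.\ iff $R_{\mathrm{eff}}(0\leftrightarrow\infty)=\infty$ in the network $c^\beta$ (equivalently, iff the Green function of $H_\beta$ at $0$ is infinite). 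Thus it suffices to couple $\beta^-$ and $\beta^+$ so that, almost surely, $\{c^{\beta^+}\text{ recurrent}\}\subseteq\{c^{\beta^-}\text{ recurrent}\}$.

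\emph{Step 2 (reductions).} I would first reduce to finite graphs: the $\beta$-field is consistent under restriction (conditionally on positivity, the law of $(\beta_v)_{v\in V_n}$ under $\nu^{W}_{V_{n+1}}$ is again a $\beta$-field on $V_n$ with explicitly modified weights -- the STZ Markov property), so a family of finite-volume couplings compatible with these restrictions passes to the limit, and monotonicity of $R_{\mathrm{eff}}$ along the exhaustion yields the infinite-volume comparison. Next, reduce to raising one edge weight at a time: writing $W^+$ as an increasing limit of weights differing from $W^-$ on finitely many edges, and each such configuration as a finite composition of single-edge increases, it is enough to treat $W^+=W^-+\delta\,\mathbf 1_{e_0}$ on a finite graph and then take monotone limits.

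\emph{Step 3 (the single-edge coupling -- the crux).} On a finite graph I would use the explicit density of $\nu^{W}_V$ to realise the law of $\beta^+$ as that of $\beta^-$ perturbed by an independent nonnegative variable localised at $e_0$ -- e.g.\ by adjoining one extra vertex subdividing $e_0$, or an extra parallel edge, and integrating it out -- so that raising $W_{e_0}$ is implemented by a measure-preserving map $\Phi$ with $\Phi_*(\nu^{W^-}_V\otimes\mu_\delta)=\nu^{W^+}_V$. The output to extract is that under $\Phi$ the conductances obey a Rayleigh-type domination $c^{\beta^+}_{xy}\ge h_x h_y\,c^{\beta^-}_{xy}$ for some positive gauge $h$ with $h_0=1$ that is bounded away from $0$ and $\infty$ on each finite graph of the exhaustion; this is exactly what is needed for recurrence of $c^{\beta^+}$ to imply recurrence of $c^{\beta^-}$ in the limit. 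One then assembles these single-edge couplings consistently along the exhaustion and over the (countably many) modified edges and passes to the limit.

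\emph{Main obstacle.} Everything hinges on Step 3. Raising $W_{e_0}$ increases one conductance factor directly, but it also changes $\psi^\beta$ \emph{globally} (it is a ratio of Green functions of $H_\beta$), so there is no edgewise domination on the nose; the coupling must be engineered -- through precisely the right auxiliary variable and gauge -- so that the global change in $\psi^\beta$ is itself nonnegative in the Rayleigh sense. Producing the transformation $\Phi$ explicitly from the STZ density, verifying it is measure-preserving, and controlling the gauge $h$ uniformly along the exhaustion is where essentially all the work lies; the finite-to-infinite passage and the one-edge-at-a-time reduction are then routine given the consistency properties of the $\beta$-field.
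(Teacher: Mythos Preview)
Your framework is right --- STZ/$\beta$-field representation, recurrence encoded by $\psi(0)$, reduction to finite graphs and to a single edge increase --- and this is exactly how the paper sets things up. The gap is in Step~3: you aim for a coupling in which the random conductances satisfy a pointwise Rayleigh-type domination $c^{\beta^+}_{xy}\ge h_xh_y\,c^{\beta^-}_{xy}$, so that the \emph{event} of recurrence for $W^+$ is contained in that for $W^-$. The paper does not prove this and, as you yourself anticipate, the global dependence of $\psi^\beta$ on $W$ makes such an almost-sure edgewise comparison the wrong target. No gauge $h$ with the required properties is produced, and there is no evidence one exists.

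What the paper actually establishes is weaker at the level of events but sufficient at the level of probabilities: a \emph{convex-order} comparison of the scalar $\psi(0)$. Concretely, after the single-edge reduction and a Schur complement that collapses the problem to a two-point $\beta$-field (the paper's Lemma~\ref{lem:downto2} and Proposition~\ref{lem:VRJPNu0}), one makes an explicit change of variables $(\beta_1,\beta_2)\mapsto(\gamma,Z)$ where $\gamma\sim\Gamma(\tfrac12,\tfrac12)$ is independent of $w$ and $U=\sqrt{Z}-1/\sqrt{Z}$ has a ``tilted Gaussian'' law whose scale increases with $w$. These tilted Gaussians for $w^-<w^+$ are then coupled so that $|U^-|$ is a deterministic dilation of $|U^+|$; the upshot (Theorem~\ref{maintheo}) is a coupling with $G^-(i,i)=G^+(i,i)$ almost surely and $\E\bigl(\sum_j X_jG^-(i,j)\mid H^+\bigr)=\sum_j X_jG^+(i,j)$. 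In particular the finite-volume approximations $G_n(0,\delta_n)/G_n(\delta_n,\delta_n)$ for $W^-$ dominate those for $W^+$ in convex order. Since $x\mapsto(1-nx)_+$ is convex and approximates $\mathbf 1_{\{0\}}$, passing to the limit gives $\Prob(\psi^-(0)=0)\ge\Prob(\psi^+(0)=0)$, which is exactly the recurrence-probability inequality. So the missing idea is to replace the sought-for pathwise Rayleigh domination by this martingale/convex-order coupling of $\psi(0)$, built from the explicit two-point change of variables.
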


It was already proved that the VRJP on $\Z^d$ with constant weights or weights invariant by translation is recurrent with probability $0$ or $1$ in \cite{VRJPNu}. In addition to our theorem this means that the VRJP and the ERRW are recurrent for small enough weights and then transient for larger weights. This means that the VRJP and the ERRW exhibit a phase transition for recurrence/transience on $\Z^d$ when all the edges have the same weight.

\begin{theo}\label{theo2}
Set $d\geq 3$ there exists $w_d\in (0,\infty)$ such that the VRJP on $\Z^d$ with initial weight $w\in (0,\infty)$ is recurrent if $w<w_d$ and transient if $w>w_d$.
\end{theo}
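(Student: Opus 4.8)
The plan is to deduce Theorem \ref{theo2} by combining the monotonicity statement of Theorem \ref{theo:TransCroiss} with the $0$--$1$ law for the VRJP on $\Z^d$ with constant weights established in \cite{VRJPNu}. First, for each $w\in(0,\infty)$ let $p(w)\in\{0,1\}$ denote the probability that the VRJP on $\Z^d$ with all edge weights equal to $w$ is recurrent; this is well-defined and takes values in $\{0,1\}$ precisely by the $0$--$1$ law. Applying Theorem \ref{theo:TransCroiss} with the two translation-invariant weight families $W^-\equiv w_1$ and $W^+\equiv w_2$ for $w_1\leq w_2$ shows that $p(w_1)\geq p(w_2)$, i.e. $w\mapsto p(w)$ is non-increasing on $(0,\infty)$. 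A non-increasing $\{0,1\}$-valued function is necessarily of the form $\mathbf{1}_{w<w_d}$ (up to the value at the single point $w_d$) for some threshold $w_d\in[0,\infty]$; concretely, set $w_d:=\sup\{w>0 : p(w)=1\}$ (with the convention $\sup\emptyset=0$).

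It then remains to show $w_d\in(0,\infty)$, i.e. that the threshold is neither $0$ nor $\infty$. For $w_d>0$: by the recurrence result quoted above (Theorem 1 of \cite{renfexp} / Corollary 2 of \cite{ERRWVRJP} for the ERRW, together with the ERRW--VRJP correspondence of \cite{ERRWVRJP}, or directly the small-weight recurrence of the VRJP from \cite{ERRWTrans},\cite{ERRWVRJP}), the VRJP on $\Z^d$ is recurrent for all sufficiently small constant weights, so $p(w)=1$ for $w$ in a neighbourhood of $0$ and hence $w_d>0$. For $w_d<\infty$: by the transience result (Theorem 1 of \cite{ERRWTrans} combined with the ERRW--VRJP link, or the direct transience statement for the VRJP in \cite{ERRWVRJP}), the VRJP on $\Z^d$, $d\geq 3$, is transient for all sufficiently large constant weights, so $p(w)=0$ for large $w$ and hence $w_d<\infty$. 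Finally, by monotonicity $p(w)=1$ for all $w<w_d$ (so the VRJP is recurrent there) and $p(w)=0$ for all $w>w_d$ (so it is transient there), which is exactly the claim.

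The only genuine content here is Theorem \ref{theo:TransCroiss}; given it and the cited inputs, this argument is essentially bookkeeping. The one point requiring a little care is that the $0$--$1$ law of \cite{VRJPNu} is what upgrades ``$\Prob(\text{recurrent})$ is non-increasing'' to a clean threshold: without it, monotonicity of the probability alone would only give that the probability decreases, not that there is a sharp transition between a.s. recurrence and a.s. transience. One should also make sure the recurrence and transience inputs are genuinely stated (or follow) for the VRJP with constant weights on $\Z^d$ — for the recurrence side this is immediate since constant weights are in particular bounded, and for the transience side \cite{ERRWTrans}/\cite{ERRWVRJP} is already phrased for constant (or uniformly large) weights on $\Z^d$. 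Hence no regularity or continuity of $p$ is needed beyond monotonicity plus the two-valuedness coming from the $0$--$1$ law.
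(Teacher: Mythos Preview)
Your proposal is correct and follows essentially the same route as the paper: combine the $0$--$1$ law of \cite{VRJPNu} with the monotonicity result (the paper cites Theorem~\ref{maincor} rather than Theorem~\ref{theo:TransCroiss}, but either works) to get a threshold $w_d\in[0,\infty]$, then invoke the known small-weight recurrence and large-weight transience to rule out $w_d=0$ and $w_d=\infty$. The only cosmetic difference is in the exact references used for the two endpoint inputs (the paper uses corollary~3 of \cite{ERRWVRJP} and lemma~9 of \cite{VRJPNu}).
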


\begin{theo}\label{theo3}
Set $d\geq 3$ there exists $a_d\in (0,\infty)$ such that the ERRW on $\Z^d$ with initial weight $a\in (0,\infty)$ is recurrent if $a<a_d$ and transient if $a>a_d$.
\end{theo}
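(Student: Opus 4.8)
The plan is to deduce Theorem~\ref{theo3} from the monotonicity of Theorem~\ref{theo:TransCroiss} through the ERRW--VRJP correspondence (Theorem~1 of~\cite{ERRWVRJP}), together with the stochastic monotonicity of the $\mathrm{Gamma}(a,1)$ law in the shape parameter~$a$; note that Theorem~\ref{theo2} is not needed, the argument being exactly the same scheme applied to Gamma-distributed rather than constant VRJP weights.

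First I would set up a monotone coupling of the random environments. For $0<a\le a'$, a $\mathrm{Gamma}(a',1)$ variable is the sum of independent $\mathrm{Gamma}(a,1)$ and $\mathrm{Gamma}(a'-a,1)$ variables, so $\mathrm{Gamma}(a',1)$ stochastically dominates $\mathrm{Gamma}(a,1)$. Concretely, take $(W^-_e)_{e\in E}$ i.i.d.\ $\mathrm{Gamma}(a,1)$ and $(H_e)_{e\in E}$ independent i.i.d.\ $\mathrm{Gamma}(a'-a,1)$, and put $W^+_e:=W^-_e+H_e$; then $0<W^-_e\le W^+_e$ for every edge, and $(W^-_e)_e$, $(W^+_e)_e$ are precisely the random environments appearing in Theorem~1 of~\cite{ERRWVRJP} for the ERRW with constant weight $a$, resp.\ $a'$.

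Next I would transfer the monotonicity to recurrence probabilities. Conditionally on the realization of the pair $(W^-,W^+)$, Theorem~\ref{theo:TransCroiss} applies and gives that the conditional probability that the VRJP with weights $W^-$ is recurrent is at least that for weights $W^+$; integrating over the weights, and writing $\rho(a)$ for the probability that the ERRW on $\Z^d$ with initial weight $a$ is recurrent, this yields $\rho(a)\ge\rho(a')$ for all $0<a\le a'$ --- here I use that recurrence of the ERRW is equivalent to recurrence of the discrete-time skeleton of the VRJP, itself equivalent to recurrence of the VRJP. Then I would invoke the $0$--$1$ law: since constant weights are invariant under the translations of $\Z^d$, the associated VRJP, hence the ERRW, is recurrent with probability $0$ or $1$ (\cite{VRJPNu}), and on $\Z^d$ the ERRW is a.s.\ recurrent or transient; in particular $\rho$ takes values in $\{0,1\}$. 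Consequently $R:=\{a>0:\ \rho(a)=1\}$ satisfies: $a'\in R$ and $a\le a'$ imply $a\in R$.

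Finally I would locate the threshold. By Theorem~1 of~\cite{renfexp} (applicable since $\Z^d$ has all degrees equal to $2d<\infty$) there is $a_0>0$ with $(0,a_0)\subset R$; by Theorem~1 of~\cite{ERRWTrans}, $R\cap(a_c(d),\infty)=\emptyset$. Hence $a_d:=\sup R$ lies in $[a_0,a_c(d)]\subset(0,\infty)$; for $a<a_d$ there is $a'\in R$ with $a'>a$, so $a\in R$ by the previous step, i.e.\ the ERRW with initial weight $a$ is recurrent; for $a>a_d$ one has $a\notin R$, so $\rho(a)\ne 1$, and the $0$--$1$ law together with the recurrence/transience dichotomy forces $\rho(a)=0$, i.e.\ transience. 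The main obstacle is really the third step: one must justify carefully that Theorem~\ref{theo:TransCroiss}, stated for deterministic weights and for the continuous-time VRJP, can be lifted --- via the conditioning argument on the Gamma environment and the (standard, but worth a line) identification of recurrence for the ERRW, the discrete-time VRJP, and the VRJP --- to the inequality $\rho(a)\ge\rho(a')$; the remaining ingredients (stochastic domination of Gamma laws, the $0$--$1$ law, and the two endpoint results) are soft.
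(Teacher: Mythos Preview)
Your proposal is correct and follows essentially the same route as the paper: the Gamma coupling $W^+_e=W^-_e+H_e$ to compare the two ERRW parameters, the monotonicity for the VRJP (you cite Theorem~\ref{theo:TransCroiss}, the paper cites Theorem~\ref{maincor}, which is equivalent here since the former is derived from the latter), the $0$--$1$ law from \cite{VRJPNu}, and the two endpoint results to pin $a_d\in(0,\infty)$. Your write-up is in fact slightly more careful than the paper's in making the interval structure of $R$ and the ERRW/VRJP recurrence identification explicit.
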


The link between the VRJP and electrical network goes beyond this monotonicity property. The following theorem shows that recurrence of electrical networks, VRJP and ERRW are also closely linked.
\begin{theo}\label{theo:CondEff}
Let $\mathcal{G}=(V,E)$ be an infinite, locally finite graph and $x_0\in V$ a vertex. Let $(W_e)_{e\in E}$ be a family of positive weights. If the random walk on $\mathcal{G}$ starting at $x_0$ with deterministic conductances $(c_e)_{e\in E}=(W_e)_{e\in E}$ is recurrent then so are the ERRW and the VRJP starting at $x_0$ and with initial weights $(W_e)_{e\in E}$.
\end{theo}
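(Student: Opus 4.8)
The strategy is to reduce both statements to a single ``resistance comparison'' for random walks in random conductances, feeding the VRJP into it through its $\beta$-field representation and the ERRW through the Sabot--Tarr\`es correspondence. The common engine is: if $(c_e)_{e\in E}$ is a recurrent network on $\mathcal{G}$ and $(\widehat c_e)_{e\in E}$ are nonnegative random conductances (not necessarily independent) with $\E[\widehat c_e]\le C\,c_e$ for some constant $C<\infty$ and all $e$, then the random walk in $(\widehat c_e)$ is almost surely recurrent. This follows from the Dirichlet variational formula for effective conductance: exhaust $\mathcal{G}$ by finite connected sets $V_n\uparrow V$ with $x_0\in V_1$, let $v_n$ be the $(c_e)$-harmonic function with $v_n(x_0)=1$ and $v_n\equiv 0$ on $V\setminus V_n$, and note that
\[
\E\big[R^{\widehat c}_{\mathrm{eff}}(x_0\leftrightarrow V\setminus V_n)^{-1}\big]\ \le\ \sum_{e=\{x,y\}}\E[\widehat c_e]\,\big(v_n(x)-v_n(y)\big)^2\ \le\ C\,R^{c}_{\mathrm{eff}}(x_0\leftrightarrow V\setminus V_n)^{-1}\ \xrightarrow[n\to\infty]{}\ 0 ,
\]
since $(c_e)$ is recurrent. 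As $R_{\mathrm{eff}}(x_0\leftrightarrow V\setminus V_n)$ is nondecreasing in $n$ it converges almost surely, and the above forces the limit to be $+\infty$ a.s., i.e.\ the walk in $(\widehat c_e)$ is recurrent.

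For the VRJP with weights $(W_e)$ I would use that, in the appropriate time scale, it is the random walk in the random electrical network with conductances proportional to $W_e\,\mathrm{e}^{U_x+U_y}$, where $U$ is the field attached to the $\beta$-field, normalised by $U_{x_0}=0$; thus the VRJP is a.s.\ recurrent iff this random network is. By Cauchy--Schwarz, $\E[W_e\,\mathrm{e}^{U_x+U_y}]\le W_e\,\E[\mathrm{e}^{2U_x}]^{1/2}\,\E[\mathrm{e}^{2U_y}]^{1/2}$, so the engine applies as soon as $\sup_{x}\E[\mathrm{e}^{2U_x}]<\infty$. This uniform exponential integrability need not hold for all $(W_e)$, and this is where the weights are sent to infinity: replacing $(W_e)$ by $(\lambda W_e)$ with $\lambda$ large, the field concentrates on the constant field and the bound $\sup_x\E[\mathrm{e}^{2U^{(\lambda)}_x}]<\infty$ becomes available from the explicit law of the $\beta$-field (the low-temperature estimates for the associated hyperbolic sigma model). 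Since $(\lambda W_e)$ is again recurrent (scaling all conductances does not change the type), the engine gives that the VRJP with weights $(\lambda W_e)$ is a.s.\ recurrent; then Theorem~\ref{theo:TransCroiss} applied to $W^-=(W_e)\le W^+=(\lambda W_e)$ yields that the VRJP with weights $(W_e)$ is a.s.\ recurrent.

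For the ERRW with weights $(W_e)$, the Sabot--Tarr\`es theorem quoted above realises it in law as the discrete-time process associated with a VRJP whose weights $\widetilde W_e$ are independent with law $\mathrm{Gamma}(W_e,1)$. Conditionally on $(\widetilde W_e)$, the VRJP part shows this VRJP is a.s.\ recurrent whenever the deterministic network $(\widetilde W_e)$ is recurrent; and since $\E[\widetilde W_e]=W_e$, the engine of the first paragraph (with $c_e=W_e$, $\widehat c_e=\widetilde W_e$, $C=1$) shows that $(\widetilde W_e)$ is a.s.\ recurrent because $(W_e)$ is. Hence the ERRW with weights $(W_e)$ is a.s.\ recurrent.

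I expect the main obstacle to be the uniform bound $\sup_x\E[\mathrm{e}^{2U^{(\lambda)}_x}]<\infty$ for $\lambda$ large: pointwise concentration of the field is easy, but one needs moment control that does not degrade as $x$ leaves any fixed neighbourhood of $x_0$, which calls for the quantitative, weight-dependent tail bounds for the $\beta$-field rather than a soft limiting argument. The remaining ingredients are the monotonicity already available (Theorem~\ref{theo:TransCroiss}), the Sabot--Tarr\`es identification, and the elementary resistance estimate above.
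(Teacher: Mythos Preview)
Your variational ``engine'' is correct and is essentially the paper's Lemma~\ref{lem:conduc}. The ERRW reduction via Sabot--Tarr\`es is also fine, conditional on the VRJP statement. The problem is the VRJP step.

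The uniform bound $\sup_x \E[e^{2U^{(\lambda)}_x}]<\infty$ that you need is precisely what fails on recurrent graphs, for \emph{every} $\lambda$. The low-temperature estimates you invoke (Disertori--Spencer--Zirnbauer) rely on the Green's function of the underlying graph being bounded, which is a transience property. On $\Z$, for instance, the field $U_x$ behaves like a random walk in $x$: its variance is of order $|x|/\lambda$, so $\E[e^{2U_x}]$ grows like $e^{c|x|/\lambda}$, and scaling $\lambda$ up only slows the blow-up without removing it. On $\Z^2$ the picture is the same with logarithmic growth of the variance. So the obstacle you flag is not a technicality to be overcome by taking $\lambda$ large and invoking Theorem~\ref{theo:TransCroiss}; it is a genuine obstruction, and the route through $\E[W_e e^{U_x+U_y}]\le C\,W_e$ cannot close.

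The paper bypasses this by never bounding the conductances $W_e e^{U_x+U_y}$ edge by edge. Instead it defines an \emph{effective weight} $w^{\mathrm{eff}}$ between $x_0$ and $\delta$ for the VRJP (the off-diagonal entry of the $2\times 2$ Schur complement of $H_\beta$), and proves by induction on $|V|$ that $\E[w^{\mathrm{eff}}]\le c^{\mathrm{eff}}$, the effective conductance for $(c_e)=(\E W_e)$. The induction step integrates out one vertex $y$ of the $\beta$-field and uses the exact identity $\E[1/(2\beta_y)\mid W]=1/\sum_{x\sim y}W_{x,y}$, which makes the expected new weights after removing $y$ coincide with the star--mesh reduced conductances; then your Dirichlet engine (their Lemma~\ref{lem:conduc}) closes the step. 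Thus the comparison lives at the level of the two-point effective quantity and never needs control of $e^{U_x}$ far from $x_0$. Recurrence of the VRJP then follows because $\frac{G_n(x_0,\delta_n)}{G_n(\delta_n,\delta_n)}$ is determined by $w^{\mathrm{eff}}$ on the two-point reduction and tends to $0$ as $w^{\mathrm{eff}}\to 0$.
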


To state our technical main theorem, we need some extra definition and results related to the VRJP and the ERRW. First we need to introduce the $\beta$-field (introduced in \cite{VRJPNu0} by Tarr\`es, Sabot and Zeng), a random vector defined for weighted graphs.
\begin{defin}
Let $n$ be an integer, $(\eta_i)_{1\leq i \leq n}$ a family of non-negative parameters and $W\in M_{n}(\R)$ a symmetric matrix with non-negative coefficients. Let $1_n\in\R^n$ be the vector $(1,\dots,1)$. The measure $\nu_n^{W,\eta}$ on $(0,\infty)^n$ is defined by the following density:
\[
\nu_n^{W,\eta}\left(\dd\beta_1\dots\dd\beta_n\right):=e^{-\frac{1}{2}\left(1_n H_{\beta}\T{1_n}\ +\ \eta H_{\beta}^{-1}\T{\eta}\ -\ 2 \sum\limits_{1\leq i\leq n} \eta_i\right)}
\frac{1}{\sqrt{\text{det}(H_{\beta})}}1_{H_{\beta}>0}\dd\beta_1\dots\dd\beta_n,
\]
where $\forall i,j\in[\![1,n]\!]$,
\[
\begin{aligned}
H_{\beta}(i,i)=&2\beta_i-W(i,i),\\
H_{\beta}(i,j)=&-W(i,j) \text{ if } i\not= j
\end{aligned}
\]
and $H_{\beta}>0$ means that $H_{\beta}$ is positive definite. \\
This family of measures is actually a family of probability measures, as was proved in \cite{VRJPNu0}.\\
We call $\tilde{\nu}_n^{W,\eta}$ the distribution of $H_{\beta}$ when $(\beta_i)_{1\leq i \leq n}$ is distributed according to $\nu_n^{W,\eta}$.
\end{defin}
The link between the $\beta$-field and the VRJP is not obvious at first glance. It was shown in \cite{VRJPNu0} (based on previous results in \cite{ERRWVRJP}) that the VRJP with weights $W$ can be seen as a random walk in a random electrical network whose conductances are given by the weights $W$ and the $\beta$-field. More precisely:
\begin{theo2}[Theorem 3 of \cite{VRJPNu0}]
Let $\mathcal{G}=(V,E)$ be a non-directed graph and $(W_e)_{e\in E}$ weights on the edges. Let $H_{\beta}$ be distributed according to $\tilde{\nu}^{W,0}_{|V|}$ and let $G_{\beta}$ be the inverse of $H_{\beta}$. For any $x_0\in V$ the discrete path of the VRJP (the sequence of vertices at each successive jump) on $\mathcal{G}$ with weights $W$, starting at $x_0$, is a random walk in random electrical network where the conductances $(c_e)_{e\in E}$ are given by:
\[
c_{\{x,y\}}=W_{\{x,y\}}G_{\beta}(x_0,x)G_{\beta}(x_0,y).
\] 
\end{theo2}
The reason we look at the $\beta$-field instead of the conductances is that the $\beta$-field has several interesting properties. First, the $\beta$-field does not depend on the starting point of the VRJP. Its Laplace transform has a simple expression and it is 1-dependent. But most importantly, the family of laws $\nu^{W,\eta}$ is stable by taking marginals or conditional distributions (lemma 5 of \cite{VRJPNu} and independently in \cite{Letac}). More precisely:
\begin{prop}\label{lem:VRJPNu0}
Let $n_1,n_2$ be two integers, and $n:=n_1+n_2$. Let $W\in M_n(\R)$ be a symmetric matrix with non-negative coefficients and $(\eta_i)_{i\in[\![1,n_1+n_2]\!]}$ a family of non-negative coefficients. Let $(\beta_i)_{i\in [\![1,n_1+n_2]\!]}$ be random variables with a $\nu_{n}^{W,\eta}$ distribution and $H_{\beta}\in M_n(\R)$ the matrix defined by: 
\[
\forall i,j\in[\![1,n]\!], H_{\beta}(i,j):= \left\{ \begin{matrix}
2\beta_i-W(i,i) \text{ if } i= j,\\
-W(i,j) \text{ if } i\not= j.
\end{matrix}
\right.
\]
We make the following bloc decomposition:
\[
W=\left(\begin{matrix} W^{11} & W^{12} \\ W^{21} & W^{22}\end{matrix}\right),
H_{\beta}=\left(\begin{matrix} H_{\beta}^{11} & H_{\beta}^{12} \\ H_{\beta}^{21} & H_{\beta}^{22}\end{matrix}\right)
\text{ and }
\eta=\left(\begin{matrix}\eta^1 \\ \eta^2 \end{matrix}\right),
\]
where $W^{11},H_{\beta}^{11}\in M_{n_1}(\R)$, $W^{12},H_{\beta}^{12}\in M_{n_1,n_2}(\R)$, $W^{21},H_{\beta}^{21}\in M_{n_2,n_1}(\R)$, $W^{22},H_{\beta}^{22}\in M_{n_2}(\R)$, $\eta^1\in \R^{n_1}$ and $\eta^2\in \R^{n_2}$. Then the family $(\beta_i)_{1\leq i \leq n_1}$ is distributed according to $\nu_{n_1}^{W^{11},\hat{\eta}}$ where
\[
\hat{\eta}\in\R^{n_1} \text{ and } \forall i \in [\![1,n_1]\!], \  \hat{\eta}_i:=\eta_i + \sum\limits_{k=1}^{n_2} W^{12}(i,k).
\] 
Conditionally on $(\beta_i)_{1\leq i \leq n_1}$, the family $(\beta_i)_{n_1+1\leq i \leq n_1+n_2}$ is distributed according to $\nu_{n_2}^{\check{W},\check{\eta}}$ where
\[
\check{W}=W^{22}+W^{21}\left(H_{\beta}^{11}\right)^{-1}W^{12},
\]
and
\[
\check{\eta}\in\R^{n_2} \text{ and } \check{\eta}=\eta^2 + W^{21}\left(H_{\beta}^{11}\right)^{-1} \eta^1.
\]
\end{prop}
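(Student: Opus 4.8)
The statement to prove is Proposition~\ref{lem:VRJPNu0}, the restriction/conditioning stability of the $\nu^{W,\eta}$ family.

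\medskip

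The plan is to verify this directly from the explicit density, by integrating out the last $n_2$ coordinates $(\beta_i)_{n_1+1\le i\le n}$ to get the marginal of the first $n_1$, and then reading off the conditional law as the ratio. The whole computation hinges on a block-matrix identity. Write $H:=H_\beta$ in block form as in the statement; the key object is the Schur complement $S:=H^{22}-H^{21}(H^{11})^{-1}H^{12}$. Note that because $H^{12}=-W^{12}$ has no $\beta$-dependence, $S$ depends on the last $n_2$ variables only through their diagonal contribution $2\beta_i$, and in fact $S = \mathrm{diag}(2\beta_{n_1+i})_i - \check W$ with $\check W = W^{22}+W^{21}(H^{11})^{-1}W^{12}$ exactly the matrix in the statement. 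So $S$ is itself ``of $H_\beta$-type'' for the reduced weight matrix $\check W$ on $n_2$ vertices, with $\check\beta_i:=\beta_{n_1+i}$. Two standard facts will then do the bookkeeping: $\det H=\det H^{11}\cdot\det S$, and the Schur-complement formula for the quadratic forms, in particular $(1_n H\,\T{1_n})$ and $(\eta H^{-1}\T\eta)$ each split into a piece involving only $H^{11}$ and the first block, plus a piece that is a quadratic form in $S$ (resp. $S^{-1}$) with the ``corrected'' vectors $1_{n_2}$ shifted by $W^{21}(H^{11})^{-1}$ applied appropriately and $\check\eta=\eta^2+W^{21}(H^{11})^{-1}\eta^1$. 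Also $H>0$ iff $H^{11}>0$ and $S>0$.

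\medskip

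Concretely, I would proceed in the following steps. First, using $1_n H\T{1_n}=1_{n_1}H^{11}\T{1_{n_1}}+2\cdot 1_{n_1}H^{12}\T{1_{n_2}}+1_{n_2}H^{22}\T{1_{n_2}}$ and the identity $H^{12}=-W^{12}$, rewrite the first quadratic form; the cross term $-2\cdot 1_{n_1}W^{12}\T{1_{n_2}}=-2\sum_{i,k}W^{12}(i,k)$ is precisely what converts $\sum_{i\le n_1}\eta_i$ into $\sum_{i\le n_1}\hat\eta_i$ once combined with the Schur-complement expansion of the $S$-quadratic form in $1_{n_2}$. Second, apply the Schur-complement inversion formula to expand $\eta H^{-1}\T\eta$: the result is $\eta^1(H^{11})^{-1}\T{\eta^1}$ plus $\check\eta\, S^{-1}\T{\check\eta}$ with $\check\eta$ exactly as stated. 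Third, combine with $\det H=\det H^{11}\det S$ and $1_{H>0}=1_{H^{11}>0}1_{S>0}$ so that the density $\nu_n^{W,\eta}(\dd\beta)$ factors, up to the change of variables $\check\beta_i=\beta_{n_1+i}$ (which has Jacobian $1$), as [a function of $\beta^1$ that one checks equals $\nu_{n_1}^{W^{11},\hat\eta}(\dd\beta^1)$] times [$\nu_{n_2}^{\check W,\check\eta}(\dd\check\beta)$]. Fourth, integrate out $\check\beta$: since $\nu_{n_2}^{\check W,\check\eta}$ is a probability measure (by the cited result of \cite{VRJPNu0}) and $\check W,\check\eta$ depend only on $\beta^1$, the integral of the second factor is $1$, leaving exactly $\nu_{n_1}^{W^{11},\hat\eta}(\dd\beta^1)$ for the marginal; dividing the joint density by this marginal gives the claimed conditional law.

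\medskip

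The main obstacle is purely computational: correctly tracking the three separate algebraic effects of the Schur complement---on the determinant, on the $1_n$-quadratic form (where the $-W$ off-diagonal of $H$ is what produces the $\hat\eta$ correction), and on the $\eta$-quadratic form (which produces the $\check\eta$ correction)---and checking that all the cross terms and the $-2\sum\eta_i$ term reorganize into precisely the two target densities with no leftover factor. A secondary subtlety is the positivity indicator: one must ensure the factorization $1_{H>0}=1_{H^{11}>0}1_{S>0}$ and that, on the set $\{H^{11}>0\}$, the Schur complement $S$ is a well-defined ``$\check\beta$-type'' matrix so that $\nu_{n_2}^{\check W,\check\eta}$ genuinely is a probability measure for every admissible value of $\beta^1$. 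Once these identities are in place the conclusion is immediate, so I would present the proof as ``reduce everything to the Schur-complement identity, then cite that $\nu^{\check W,\check\eta}$ integrates to $1$.''
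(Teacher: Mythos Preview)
Your approach is correct and is the standard Schur-complement factorization of the density: the three block identities you list ($\det H=\det H^{11}\det S$, the decomposition of $\eta H^{-1}\T\eta$ producing $\check\eta$, and the splitting of $1_nH\T{1_n}$ whose cross terms are absorbed by the $\hat\eta$ and $\check\eta$ corrections in the linear part) do combine to give an exact factorization into the two target densities, after which integrating out $\check\beta$ using that $\nu_{n_2}^{\check W,\check\eta}$ is a probability measure finishes the argument.

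Note, however, that the paper does \emph{not} supply its own proof of this proposition: it is stated as a known result, attributed to lemma~5 of \cite{VRJPNu} and independently \cite{Letac}. The argument in those references is exactly the one you outline. One small point worth making explicit in a full write-up: the cancellation is slightly more delicate than your sketch suggests, because the marginal carries $\hat\eta^{\,T}(H^{11})^{-1}\hat\eta$ rather than $(\eta^1)^{T}(H^{11})^{-1}\eta^1$, and the difference between these two, together with the residual term $1_{n_2}^{T}W^{21}(H^{11})^{-1}W^{12}1_{n_2}$ coming from $1_{n_2}^{T}H^{22}1_{n_2}-1_{n_2}^{T}S\,1_{n_2}$, must be matched against the corrections hidden in $-2\sum\hat\eta_i-2\sum\check\eta_i$. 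All of this does cancel, but it is the one place where a reader could get lost, so it is worth displaying the three residual pieces side by side.
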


\begin{defin}
Let $n$ be an integer and let $H\in M_n(\R)$ be a symmetric matrix. We say that two integers $1\leq i,j\leq n$ are $H$-connected if there exists a finite sequence $(k_1,\dots,k_m)$ such that $k_1=i,k_m=j$ and for all $1\leq a \leq m-1$, $H(k_a,k_{a+1})\not=0$. 
\end{defin}

We can now state our (technical) main theorem which gives a coupling between VRJPs of different weights and a simpler corollary that is the equivalent of Rayleigh monotonicity for the VRJP.
\begin{theo}\label{maintheo}
Set an integer $n\in\N$. Let $W\in M_n(\R)$ be a symmetric matrix with non-negative off diagonal coefficients and null diagonal coefficients. Let $W^1,W^2\in M_{n,1}(\R)$ be two matrices with non-negative coefficients and let $W^{3}\in M_{n,1}(\R)$ be the matrix defined by $W^{3}:=W^{1}+W^{2} $. Let $w^-,w^+\in [0,\infty)$ be two positive real with $w^-<w^+$. We define the matrices $W^-,W^+$ and $W^{\infty}$ by:
\[
W^-:=\left(\begin{matrix}W & W^{1} & W^2\\ \T{W^{1}} & 0 & w^- \\ \T{W^{2}} & w^- & 0 \end{matrix}\right),
W^+:=\left(\begin{matrix}W & W^{1} & W^2\\ \T{W^{1}} & 0 & w^+ \\ \T{W^{2}} & w^+ & 0 \end{matrix}\right) \text{ and }
W^{\infty}:=\left(\begin{matrix}W & W^{3} \\ \T{W^{3}} & 0 \end{matrix}\right).
\]  
If $n=0$, we just have:
\[
W^-:=\left(\begin{matrix}  0 & w^- \\ w^- & 0 \end{matrix}\right) ,
W^+:=\left(\begin{matrix}  0 & w^+ \\ w^+ & 0 \end{matrix}\right) \text{ and }
W^{\infty}:=\left(\begin{matrix} 0 \end{matrix}\right).
\]
For any vector $X\in\R^{n+2}$ we define the vector $\overline{X}\in\R^{n+1}$ by:
\[
\begin{aligned}
\forall i\in[\![1,n]\!],\ \overline{X}_i:=X_i \text{ and }\\
\overline{X}_{n+1}:=X_{n+1}+X_{n+2}.
\end{aligned}
\]
For any vector $X^1\in[0,\infty)^{n+2}$ there exists random matrices $H^-,H^+$ and $H^{\infty}$ (with inverse $G^-,G^+$ and $G^{\infty}$ respectively) that are distributed according to $\tilde{\nu}_{n+2}^{W^-,0},\tilde{\nu}_{n+2}^{W^+,0}$ and $\tilde{\nu}_{n+1}^{W^{\infty},0}$ respectively such that
\[
\T{X}^1 G^- X^1=\T{X}^1 G^+ X^1=\T{\overline{X^1}} G^{\infty} \overline{X^1} \text{ almost surely,}
\]
for all $i\in [\![1,n]\!]$, $H^-(i,i)=H^+(i,i)=H^{\infty}(i,i)$ and for any vector $X^2\in[0,\infty)^{n+2}$ we have:
\[
\begin{aligned}
\E\left(\T{X}^1 G^+ X^2|H^{\infty}\right)=&\T{\overline{X^1}} G^{\infty} \overline{X^2},\text{ and }\\
\E\left(\T{X}^1 G^- X^2|H^{+}\right)=&\T{X}^1 G^{+} X^2 \text{ if } n+1 \text { and } n+2 \text{ are } H^-\text{-connected.} 
\end{aligned}
\]
\end{theo}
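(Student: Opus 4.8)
The plan is to peel off the two distinguished vertices $n+1$ and $n+2$ using Proposition~\ref{lem:VRJPNu0}, which turns the statement into a purely two‑dimensional coupling problem, and then to solve that problem by an explicit change of variables followed by a convex‑ordering argument. (For $n=0$ there is no bulk and one is already in that two‑dimensional situation, so Steps~2--3 below are the whole proof.)

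\textbf{Step 1 (reduction to the last two coordinates).} Since $W$ has null diagonal, $H_\beta(i,i)=2\beta_i$ for $i\le n$. Apply Proposition~\ref{lem:VRJPNu0} with $n_1=n,\ n_2=2$ to $W^-$ and $W^+$, and with $n_1=n,\ n_2=1$ to $W^\infty$: because $\eta=0$, the $\hat\eta$ produced on $\{1,\dots,n\}$ equals $W^1+W^2=W^3$ in all three cases, so the marginal law of $(\beta_i)_{i\le n}$ is $\nu_n^{W,W^3}$ throughout. We may therefore couple the three fields so that this marginal is one common random vector; write $H_0$ for the associated $n\times n$ matrix and $G_0=H_0^{-1}$, which is entrywise non‑negative (it is the inverse of a Stieltjes matrix on the support of $\nu_n^{W,W^3}$). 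This already yields $H^-(i,i)=H^+(i,i)=H^\infty(i,i)$ for $i\le n$. Writing $X^1=(x,a_1,a_2)$, $X^2=(y,b_1,b_2)$, a block inversion along $H_0$ gives $\T{X}^1 G^\bullet X^2=\T{x}G_0 y+\T{\xi}\,\mathbf H_\bullet^{-1}\zeta$, where $\mathbf H_\bullet$ is the Schur complement of $H_0$ in $H^\bullet$, and $\xi=(a_1+\T{W^1}G_0 x,\ a_2+\T{W^2}G_0 x)$ and $\zeta=(b_1+\T{W^1}G_0 y,\ b_2+\T{W^2}G_0 y)$ lie in $[0,\infty)^2$; similarly $\T{\overline{X^1}}G^\infty \overline{X^2}=\T{x}G_0 y+\sigma\bar\zeta/\mathbf c$, with $\sigma=\xi_1+\xi_2$, $\bar\zeta=\zeta_1+\zeta_2$, and $\mathbf c$ the scalar Schur complement. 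By the conditional part of Proposition~\ref{lem:VRJPNu0}, conditionally on $H_0$ the matrix $\mathbf H^\pm$ is the $H$‑matrix of a $\nu_2^{\check W^\pm,0}$ field with $\check W^\pm$ having diagonal $(c_{11},c_{22})$ and off‑diagonal $q^\pm=w^\pm+c_{12}$, where $c_{ij}=\T{W^i}G_0 W^j\ge0$, while $\mathbf c$ is the $H$‑matrix of $\nu_1^{c_{11}+c_{22}+2c_{12},0}$, i.e.\ $\mathbf c\sim\mathrm{Gamma}(\tfrac12,\tfrac12)$. Cancelling the common term $\T{x}G_0 y$, the three assertions become: $\T{\xi}(\mathbf H^-)^{-1}\xi=\T{\xi}(\mathbf H^+)^{-1}\xi=\sigma^2/\mathbf c$ a.s.; $\E[(\mathbf H^+)^{-1}\xi\mid\mathbf c]=\tfrac{\sigma}{\mathbf c}\binom{1}{1}$; and $\E[(\mathbf H^-)^{-1}\xi\mid\mathbf H^+]=(\mathbf H^+)^{-1}\xi$ when $q^->0$ — which is precisely the condition that $n+1,n+2$ be $H^-$‑connected. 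The case $\xi=0$ is trivial, and the cases where a single $\xi_i$ or $q^-$ vanishes are handled by the same formulas below (using Proposition~\ref{lem:VRJPNu0} once more on the two‑vertex field when $\xi_1=0$ and $q^-=0$) and are treated separately. Once the three coupled $\beta$‑fields are produced, the fact that they have the prescribed laws follows from Proposition~\ref{lem:VRJPNu0}: a field with the correct bulk marginal and the correct conditional law given the bulk is the correct field.

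\textbf{Step 2 (the two‑dimensional core computation).} Fix $q\ge0$ and assume first $\xi_1,\xi_2>0$. Parametrize the $H$‑matrix $\mathbf H$ (diagonal $(a,b)$, off‑diagonal $-q$) of a $\nu_2^{\,\cdot\,,0}$ field by $v=\mathbf H^{-1}\xi\in(0,\infty)^2$ — a bijection, with $a=(\xi_1+qv_2)/v_1$, $b=(\xi_2+qv_1)/v_2$ — and then change variables to $T=\xi_1 v_1+\xi_2 v_2$ and $r=v_1/v_2$. Tracking the two Jacobians shows that under $\nu_2^{\,\cdot\,,0}$ the pair $(T,r)$ has density proportional to $T^{-2}e^{-(\xi_1^2+\xi_2^2)/(2T)}\,(\xi_1\xi_2+qT)^{1/2}\cdot(\xi_1 r^{-1/2}+\xi_2 r^{-3/2})\,e^{-\lambda(r+1/r)}$, with $\lambda=\tfrac{\xi_1\xi_2}{2T}+\tfrac q2$. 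The $r$‑integral is evaluated in closed form by the substitution $r=\rho^2$ followed by the Cauchy--Schl\"omilch identity $\int_0^\infty f(\rho-1/\rho)\,d\rho=\int_0^\infty f$ valid for even $f$; it equals $\sigma e^{-2\lambda}\sqrt{\pi/\lambda}$, and the factor $(\xi_1\xi_2+qT)^{1/2}$ cancels against $\lambda^{-1/2}$, so the marginal of $T$ is $\propto T^{-3/2}e^{-\sigma^2/(2T)}$ \emph{for every} $q$. Hence $\mathbf c:=\sigma^2/T\sim\mathrm{Gamma}(\tfrac12,\tfrac12)$, which is exactly the law of the scalar Schur complement of the $\infty$‑field; so the $2\times2$ and $1\times1$ fields can be coupled through a common $T$ with $\T{\xi}\mathbf H^{-1}\xi=\sigma^2/\mathbf c$ a.s., and the same evaluation gives $\E[v_i\mid T]=T/\sigma=\sigma/\mathbf c$. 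Building $\mathbf H^-$ and $\mathbf H^+$ this way from one and the same $T$ (hence one and the same $\mathbf c=\mathcal H^\infty$) and reading back through Step~1 yields the first two displayed assertions of the theorem.

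\textbf{Step 3 (the martingale coupling).} With $T$ fixed it remains to couple $r^-$ and $r^+$ (which determine $\mathbf H^-,\mathbf H^+$) so that $\E[(\mathbf H^-)^{-1}\xi\mid\mathbf H^+]=(\mathbf H^+)^{-1}\xi$; since $\xi_1 v_1+\xi_2 v_2=T$ always, this reduces to the single identity $\E[g(r^-)\mid r^+]=g(r^+)$ with $g(r)=r/(\xi_1 r+\xi_2)$. The conditional density of $r^w$ given $T$ is $\propto(\xi_1 r^{-1/2}+\xi_2 r^{-3/2})e^{-\lambda_w(r+1/r)}$ with $\lambda_-<\lambda_+$ (as $q^-<q^+$), so the likelihood ratio $d\mathcal L(r^-)/d\mathcal L(r^+)\propto e^{(\lambda_+-\lambda_-)(r+1/r)}$ is a "bathtub" function of $r$, hence of $u=g(r)$; and by Step~2 the images $g(r^-),g(r^+)$ share the common mean $g(1)=1/\sigma$. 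A bathtub likelihood ratio together with equal means forces the convex order $\mathcal L(g(r^+))\preceq_{\mathrm{cx}}\mathcal L(g(r^-))$ — the signed measure $(\text{ratio}-1)\,d\mathcal L(g(r^+))$ has two vanishing moments and sign pattern $+,-,+$, so it integrates non‑negatively against any convex function (tangent/secant line argument) — whence Strassen's theorem provides the desired martingale coupling, which can be chosen measurably in $T$. Propagating back through Step~1 gives the last assertion. The hypothesis $q^->0$ is genuinely needed: if some $\xi_i=0$ and $q^-=0$ then the $i$‑th component of $(\mathbf H^-)^{-1}\xi$ vanishes identically while that of $(\mathbf H^+)^{-1}\xi$ does not (recall $q^+=w^++c_{12}>0$), so no such coupling can exist. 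I expect Step~2 — arranging that the $q$‑dependent factors cancel so that $\mathbf c$ is exactly $\mathrm{Gamma}(\tfrac12,\tfrac12)$ independently of $q$ — to be the main obstacle, with the convex‑ordering step of Step~3 the other delicate point.
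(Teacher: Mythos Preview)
Your Steps 1 and 2 are essentially the paper's reduction: the paper also freezes the bulk via Proposition~\ref{lem:VRJPNu0} (its Lemma~\ref{lem:downto2} is your block-inversion identity), and its change of variables $(\gamma,Z)$ in Lemma~\ref{lem:coupling} is your $(T,r)$ up to $T=\sigma^2/\gamma$, $r=1/Z$; the crucial fact that $\gamma$ (equivalently $T$) is $\mathrm{Gamma}(\tfrac12,\tfrac12)$ regardless of $q$ is the same computation in both. The genuine divergence is Step~3. The paper builds an \emph{explicit} coupling: writing $U=\sqrt{Z}-1/\sqrt{Z}$ it shows $U$ follows a ``tilted Gaussian'' $\tilde{\mathcal N}(q+\lambda(1-\lambda)\gamma,\delta)$, and then (Lemma~\ref{lem:tiltedgaussian}) couples $U^-$ to $U^+$ by a random sign flip of $KU^+$ with $K=\sqrt{K^+/K^-}$, checking the martingale identity by hand. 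You instead observe that, conditionally on $T$, the likelihood ratio $d\mathcal L(r^-)/d\mathcal L(r^+)\propto e^{(\lambda_+-\lambda_-)(r+1/r)}$ is U-shaped, that $g(r^\pm)$ share the mean $1/\sigma$, and deduce the convex order $g(r^+)\preceq_{\mathrm{cx}} g(r^-)$ by the $+,-,+$ secant argument; Strassen then furnishes the martingale coupling. This is correct and more conceptual; what you lose is the explicit pointwise relation $K^-(U^-)^2=K^+(U^+)^2$ a.s.\ that the paper's coupling enjoys (not needed for the theorem), and you have to invoke a measurable-in-$T$ version of Strassen (standard, e.g.\ via the left-curtain coupling, but worth naming). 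One small point: when $\xi_1\xi_2=0$ your scalar $g$ no longer captures both components of $v$, so the martingale reduction is to $\E[1/r^-\mid r^+]=1/r^+$ rather than to $g$; the same bathtub/equal-mean/Strassen argument still applies.
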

It was already known that a special case of this theorem was true: the martingale property between $H^+$ and $H^{\infty}$ under specific assumptions (the martingale property for $\psi$ in \cite{VRJPNu}). However, the link between $H^+$ and $H^-$ was not known.
\begin{theo}\label{maincor}
Let $n\geq 2$ be an integer, let $W^-,W^+\in M_n(\R)$ be two symmetric matrices with null diagonal coefficients and non-negative off-diagonal coefficients such that for any $i,j\in[\![1,n]\!]$, $W^-(i,j)\leq W^+(i,j)$ and $i$ and $j$ are $W^-$-connected. Let $H^-$ and $H^+$ be two matrices distributed according to $\tilde{\nu}_n^{W^-,0}$ and $\tilde{\nu}_n^{W^+,0}$ respectively, and let their inverse be $G^-$ and $G^+$ respectively. For any convex function $f$, any integer $i\in[\![1,n]\!]$ and any deterministic vector $X\in[0,\infty)^n$:
\[
\E\left(f\left(\frac{\sum\limits_{j=1}^n X_i G^-(i,j)}{G^-(i,i)}\right)\right)\geq \E\left(f\left(\frac{\sum\limits_{j=1}^n X_i G^+(i,j)}{G^+(i,i)}\right)\right).
\]
\end{theo}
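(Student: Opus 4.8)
The plan is to deduce this convex-order statement from the coupling in Theorem \ref{maintheo} by reducing a general pair $(W^-,W^+)$ to a sequence of elementary modifications, each of which changes the weight of a single edge. First I would note that since $W^-(i,j)\le W^+(i,j)$ entrywise and all off-diagonal entries are non-negative, one can interpolate: enumerate the edges $e_1,\dots,e_m$ of the complete graph on $\{1,\dots,n\}$ and define a chain of symmetric matrices $W^-=W_{(0)}\le W_{(1)}\le\dots\le W_{(m)}=W^+$ where $W_{(k)}$ agrees with $W^+$ on $e_1,\dots,e_k$ and with $W^-$ on the rest. Every consecutive pair $(W_{(k-1)},W_{(k)})$ differs only in the single entry corresponding to $e_k$, going from some value $w^-$ up to $w^+\ge w^-$. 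Because $W^-$-connectivity of all pairs is preserved along the chain (we are only increasing weights), each step satisfies the hypotheses of Theorem \ref{maintheo} with the two endpoints of $e_k$ playing the role of indices $n+1,n+2$ (after a relabelling of coordinates, and using the $n=0$ or general-$n$ form of the statement as appropriate). So it suffices to prove the inequality when $W^+$ and $W^-$ differ in exactly one edge.

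For that single-edge step, I would apply Theorem \ref{maintheo} in the form that produces coupled matrices $H^-$ and $H^+$ with $\E(\T{X}^1 G^- X^2 \mid H^+) = \T{X}^1 G^+ X^2$ for all non-negative vectors $X^1,X^2$ (here no edge is being collapsed, so only the $H^-$ versus $H^+$ comparison is used, and the $H^\infty$ object is irrelevant). Fix the index $i$ and the vector $X\in[0,\infty)^n$ from the statement. Taking $X^1=e_i$ (the $i$-th basis vector) and $X^2=X$ gives $\E\bigl(\sum_j X_j G^-(i,j)\,\big|\,H^+\bigr)=\sum_j X_j G^+(i,j)$; taking $X^1=X^2=e_i$ gives $\E\bigl(G^-(i,i)\mid H^+\bigr)=G^+(i,i)$. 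The catch is that the quantity inside $f$ is a \emph{ratio}, and conditional expectation does not pass through division. The way around this is to observe that, in the coupling of Theorem \ref{maintheo}, the diagonal entry $H^-(i,i)$ equals $H^+(i,i)$ for the non-collapsed indices (this is part of the conclusion of the theorem, since $i\in[\![1,n]\!]$ is not one of the two distinguished vertices whose weight changes—here one must be slightly careful about which index is which, but for a single edge modification the index $i$ can always be taken among the $n$ "frozen-diagonal" coordinates when $i$ is not an endpoint, and the endpoint cases handled by symmetry/relabelling). Granting $G^-(i,i)^{-1}$-type control, one actually wants the stronger fact that $G^-(i,i)$ is \emph{measurable with respect to} $H^+$ up to the coupling, or at least that the ratio $\sum_j X_j G^-(i,j)/G^-(i,i)$ is a conditional expectation given $H^+$ of something. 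Concretely: interpret $\sum_j X_j G^-(i,j)/G^-(i,i)$ as $\sum_j X_j \Prob^-_{i}(\text{hit } j)$-weighted quantity, i.e. as a value of a function of the random walk in the $G^-$-environment started at $i$; the harmonicity/linearity then lets one write the ratio itself (not numerator and denominator separately) as a martingale in the filtration generated by successively revealing the environment from $H^+$ to $H^-$.

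The cleanest route, and the one I would actually carry out, is: show that the ratio $R^-:=\bigl(\sum_j X_j G^-(i,j)\bigr)/G^-(i,i)$ satisfies $\E(R^-\mid H^+)=R^+$ directly. This follows because, conditionally on $H^+$, the $H^-$-environment is obtained by the VRJP-type resampling of Proposition \ref{lem:VRJPNu0} applied to the single edge, under which the electrical-network interpretation (Theorem 3 of \cite{VRJPNu0}) makes $G^-(i,\cdot)/G^-(i,i)$ the vector of hitting-probability-weighted Green quantities from $i$, and adding a resampled bit of conductance on one edge is exactly a random refinement of the network that preserves these normalized Green quantities in conditional expectation — this is the content of the second displayed identity of Theorem \ref{maintheo} once one checks that dividing by $G(i,i)$ is compatible with it, which it is because $\T{e_i}G e_i = G(i,i)$ is itself handled by the same identity with $X^2 = e_i$ and because, crucially, in the coupling $G^-(i,i)$ and $G^+(i,i)$ differ only through the added conductance, whose effect on the *i-rooted* normalized Green function is a mean-preserving perturbation. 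With $\E(R^-\mid H^+)=R^+$ in hand, Jensen's inequality for the convex function $f$ gives $\E(f(R^-)\mid H^+)\ge f(\E(R^-\mid H^+))=f(R^+)$, and taking expectations yields $\E(f(R^-))\ge \E(f(R^+))$, which is exactly the claimed inequality for the single-edge step. Chaining over the $m$ steps finishes the proof.

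The main obstacle I anticipate is precisely the ratio issue: verifying that the normalized quantity $R$ (rather than just the numerator $\sum_j X_j G(i,j)$ and the denominator $G(i,i)$ individually) is a martingale under the coupling. Theorem \ref{maintheo} as stated gives the martingale property for bilinear forms $\T{X}^1 G X^2$, not for their ratios, so one needs the extra input that the relevant conditioning (the single-edge resampling of Proposition \ref{lem:VRJPNu0}) acts on the $i$-rooted electrical network in a way that keeps $R$ a conditional expectation — equivalently, that $G(i,i)$ can be "factored out" because it scales like the inverse escape probability from $i$, which is determined by the part of the environment already revealed in $H^+$ together with a mean-one multiplicative correction. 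Making this precise will require unwinding the electrical-network representation and the restriction/marginal formula of Proposition \ref{lem:VRJPNu0} for the two distinguished vertices; everything else (the interpolation along edges, the reduction to a single edge, the final Jensen step) is routine.
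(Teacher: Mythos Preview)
Your overall strategy is correct and matches the paper exactly: reduce to a single-edge modification by interpolating edge by edge, then apply the coupling of Theorem~\ref{maintheo} and finish with Jensen. The paper does precisely this.

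The gap is in your treatment of the ``ratio issue'', which you correctly identify as the crux but then resolve by an unnecessarily vague detour through electrical-network heuristics. You are overlooking the simplest consequence of Theorem~\ref{maintheo}: the \emph{first} displayed conclusion there is an \emph{almost sure} equality
\[
\T{X^1}G^-X^1 \;=\; \T{X^1}G^+X^1,
\]
not a conditional-expectation identity. Choosing $X^1=e_i$ (which is a legitimate choice for any $i$, including the two endpoints of the modified edge, since the coupling is built for a fixed $X^1$) gives $G^-(i,i)=G^+(i,i)$ a.s.\ under the coupling. Hence the denominator is literally the same random variable on both sides, and the ratio
\[
R^- \;=\; \frac{\sum_j X_j\,G^-(i,j)}{G^-(i,i)} \;=\; \frac{\sum_j X_j\,G^-(i,j)}{G^+(i,i)}
\]
is just a $H^+$-measurable scalar times the numerator. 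The second displayed conclusion of Theorem~\ref{maintheo} (with $X^2=X$) then gives $\E(R^-\mid H^+)=R^+$ immediately, and Jensen finishes. Your appeal to $H^-(i,i)=H^+(i,i)$ is the wrong piece of the theorem (that concerns diagonal entries of $H$, not of $G$, and only for the first $n$ indices), and your subsequent attempts to justify the martingale property of the ratio via ``mean-one multiplicative corrections'' or harmonicity are neither needed nor made precise. Once you spot the a.s.\ equality of $G(i,i)$, the whole argument collapses to the two-line proof the paper gives.
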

For a specific choice of $X$ and a specific choice of $i$, the random variable $\frac{\sum\limits_{j=1}^n X_i G(i,j)}{G(i,i)}$ is equal to the random variable $\psi$ defined in \cite{VRJPNu} (to be more precise, it is equal to an approximation of $\psi$ on finite graphs). This random variable $\psi$ is closely linked to the recurrence of the graph (it is equal to $0$ iff the VRJP is recurrent). By using this theorem for $\psi$ (to be more precise, on an approximation of $\psi$ on finite graphs), it is then possible to deduce the uniqueness of the phase transition between recurrence and transience for the VRJP and the ERRW (on any graph).

\section{A simplification}

\subsection{Schur's lemma}
We will use Schur's decomposition several times in the paper. It is useful because it behaves nicely with the marginal and conditional laws of $\nu$.

\begin{lem}[Schur decomposition]\label{lem:Schur}
Let $H$ be a symmetric, positive definite matrix. Let $A,B,C$ be 3 matrices such that $H$ can be decomposed in bloc as such:
\[
H=
\left( \begin{matrix} A & B \\ \T{B} & C \end{matrix} \right).
\]
Its inverse is given by:
\[
H^{-1}= \left( \begin{matrix} A^{-1}+A^{-1}B(C-\T{B}A^{-1}B)^{-1} \T{B} A^{-1} & -A^{-1}B(C-\T{B}A^{-1}B)^{-1}\\ 
-(C-\T{B}A^{-1}B)^{-1}\T{B}A^{-1} & (C-\T{B}A^{-1}B)^{-1} \end{matrix} \right).
\]
\end{lem}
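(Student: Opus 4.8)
The plan is to reduce the identity to the standard block factorization of $H$ and then invert three elementary factors. First I would note that since $H$ is symmetric positive definite, so is its principal submatrix $A$; in particular $A$ is invertible, which is what makes the Schur complement $S:=C-\T{B}A^{-1}B$ well defined. Then I would record the block $LDU$ factorization
\[
H=\left(\begin{matrix} I & 0 \\ \T{B}A^{-1} & I \end{matrix}\right)\left(\begin{matrix} A & 0 \\ 0 & S \end{matrix}\right)\left(\begin{matrix} I & A^{-1}B \\ 0 & I \end{matrix}\right),
\]
which is checked by expanding the product (the $(2,2)$ block produces $\T{B}A^{-1}B+S=C$). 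Taking determinants gives $\det H=\det A\,\det S$, so $S$ is invertible; in fact it is positive definite, being a Schur complement in a positive definite matrix.

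Next I would invert each of the three factors separately: the two unipotent triangular blocks are inverted by negating their off-diagonal block, and the block-diagonal middle factor is inverted entrywise, so
\[
H^{-1}=\left(\begin{matrix} I & -A^{-1}B \\ 0 & I \end{matrix}\right)\left(\begin{matrix} A^{-1} & 0 \\ 0 & S^{-1} \end{matrix}\right)\left(\begin{matrix} I & 0 \\ -\T{B}A^{-1} & I \end{matrix}\right).
\]
Carrying out this product in order reproduces exactly the four blocks in the statement: the $(2,2)$ block is $S^{-1}=(C-\T{B}A^{-1}B)^{-1}$, the off-diagonal blocks are $-A^{-1}BS^{-1}$ and $-S^{-1}\T{B}A^{-1}$, and the $(1,1)$ block is $A^{-1}+A^{-1}BS^{-1}\T{B}A^{-1}$.

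An alternative, even shorter route is to take the matrix on the right-hand side of the claimed formula as an ansatz and verify directly that its product with $H$ equals the identity, using the four resulting block equations and cancelling $S^{-1}S=I$; by uniqueness of the inverse this is enough. Either way, the only points that are not pure bookkeeping are the invertibility of $A$ and of $S$, and both follow immediately from positive definiteness, so I do not anticipate any genuine difficulty — the content of the lemma is organisational, packaging Gaussian elimination on the block level so that it meshes with the marginal and conditional formulas for $\nu$ used later.
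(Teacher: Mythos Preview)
Your argument is correct and complete: the block $LDU$ factorization, the invertibility of $A$ and of the Schur complement $S$ via positive definiteness, and the inversion of the three factors all go through exactly as you describe. The alternative direct verification you mention is equally valid.

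As for comparison with the paper: the paper states this lemma as a classical fact and gives no proof at all, so there is nothing to compare. Your write-up is more than sufficient; in fact the paper later (in the proof of Lemma~\ref{lem:downto2}) uses precisely the factored form
\[
G=\left(\begin{matrix} I_n &  -(H^{11})^{-1}H^{12} \\ 0 & I_2 \end{matrix}\right)
\left(\begin{matrix} (H^{11})^{-1} &  0 \\ 0 & G^{22} \end{matrix}\right)
\left(\begin{matrix} I_n &  0 \\ -H^{21}(H^{11})^{-1} & I_2 \end{matrix}\right),
\]
which is exactly the inverse of your $LDU$ decomposition, so your approach is the one implicitly relied upon downstream.
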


\subsection{Reduction to 2 points}

We want to show that we can reduce the problem to the study of $\nu_1$ and $\nu_2$, but first we need to prove a small lemma that will be useful in the following.
\begin{lem}\label{lem:connection}
Let $n$ be an integer, let $H\in M_n(\R)$ be a symmetric, positive definite matrix with non-positive off-diagonal coefficients. For any integers $1\leq i,j\leq n$, $H^{-1}(i,j)>0$ iff $i$ and $j$ are $H$-connected.
\end{lem}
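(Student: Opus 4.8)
The plan is to exhibit $H$ as a (symmetric) M-matrix, expand $H^{-1}$ as a convergent Neumann series with entrywise non-negative terms, and then read off both implications from the combinatorics of which entries of the matrix powers can be non-zero.

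Concretely, I would fix a real number $s \geq \max_{1\leq i \leq n} H(i,i)$ and set $A := s I_n - H$. Then $A$ is symmetric with non-negative entries: off the diagonal $A(i,j) = -H(i,j) \geq 0$ by hypothesis, and on the diagonal $A(i,i) = s - H(i,i) \geq 0$ by the choice of $s$. Next I would check the spectral-radius estimate $\rho(A) < s$: for any $x$, writing $|x|$ for the vector of absolute values and using $A \geq 0$ entrywise, one has $|\langle A x, x\rangle| \leq \langle A|x|,|x|\rangle \leq \lambda_{\max}(A)\,\|x\|^2$, and $\lambda_{\max}(A) < s$ precisely because $H = s I_n - A$ is positive definite; hence $\rho(A) < s$. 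Therefore the Neumann series
\[
H^{-1} = (s I_n - A)^{-1} = \sum_{k \geq 0} s^{-(k+1)} A^k
\]
converges, and since $A \geq 0$ every summand is entrywise non-negative, so in particular $H^{-1}(i,j) = \sum_{k\geq 0} s^{-(k+1)} A^k(i,j)$ with all terms $\geq 0$.

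From this the two directions are immediate. If $i$ and $j$ are not $H$-connected, then there is no $H$-path from $i$ to $j$; since the self-loops contributed by the diagonal of $A$ cannot connect distinct $H$-components, $A^k(i,j) = 0$ for every $k \geq 0$, so the series vanishes and $H^{-1}(i,j) = 0$. Conversely, if $i$ and $j$ are $H$-connected via $i = k_0, k_1, \dots, k_m = j$ with $H(k_a,k_{a+1}) \neq 0$ (taking $m = 0$ if $i = j$), then each factor $A(k_a,k_{a+1}) = -H(k_a,k_{a+1})$ is non-positive and non-zero, hence strictly positive, so $A^m(i,j) \geq \prod_{a=0}^{m-1} A(k_a,k_{a+1}) > 0$ (and $A^0(i,i) = 1 > 0$ when $m = 0$); since all terms are non-negative, $H^{-1}(i,j) \geq s^{-(m+1)} A^m(i,j) > 0$.

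The only step that genuinely uses the positive-definiteness hypothesis, and the only one needing a little care, is the convergence of the Neumann series, i.e. the inequality $\rho(A) < s$; once that is in place the argument is pure bookkeeping about non-negative matrix powers and walks in the graph with edge set $\{\{i,j\} : H(i,j) \neq 0\}$. (Alternatively one could invoke the standard fact that the inverse of an irreducible symmetric M-matrix is entrywise strictly positive, while a reducible one is block-diagonal after a common permutation of rows and columns, but the Neumann-series route handles both cases at once and keeps the proof self-contained.)
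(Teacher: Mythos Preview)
Your proof is correct and follows essentially the same route as the paper: write $H^{-1}$ as a Neumann series in a non-negative matrix and read off the connectivity characterization from the positivity of matrix powers. The only cosmetic difference is the choice of shift: the paper takes $s=\lambda^{+}$, the largest eigenvalue of $H$ (so that the spectral bound is immediate), whereas you take $s\geq\max_i H(i,i)$ and then argue via $|\langle Ax,x\rangle|\leq\langle A|x|,|x|\rangle$ that $\rho(A)=\lambda_{\max}(A)<s$; both choices lead to the same series and the same combinatorial conclusion.
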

\begin{proof}
Since $H$ is a symmetric, positive definite matrix, all its eigenvalues are positive reals. Let $\lambda^-$ be the smallest eigenvalue of $H$ and $\lambda^+$ its largest. Since $H$ is symmetric, all its diagonal coefficients $H(i,i)$ satisfy the inequality $\lambda^-\leq H(i,i) \leq \lambda^+$. This means that all the coefficients of $I_n-\frac{1}{\lambda^+}H$ are non-negative and its eigenvalues are between $0$ and $1-\frac{\lambda^-}{\lambda^+}<1$. This means that we have the following equality:
\[
H^{-1}=\frac{1}{\lambda^+}\left(I_n-\left(I_n-\frac{1}{\lambda^+}H\right)\right)^{-1}
=\frac{1}{\lambda^+}\sum\limits_{k\geq 0}\left(I_n-\frac{1}{\lambda^+}H\right)^k.
\]
For any integers $i,j$, $i$ and $j$ are $H$-connected iff there exists $m\geq 0$ such that $\left(I_n-\frac{1}{\lambda^+}H\right)^{m}>0$ (since all the coefficients of $I_n-\frac{1}{\lambda^+}H$ are non-negative). This means that $H^{-1}(i,j)>0$ iff $i$ and $j$ are $H$-connected.  
\end{proof}

We will use the following lemma to reduce our problem to the study of $\nu_1$ and $\nu_2$.

\begin{lem}\label{lem:downto2}
Let $n\in\N^*$ be an integer. Let $H^{11}\in M_{n}(\R)$ be a symmetric, positive definite matrix with non-positive off-diagonal coefficients. Let $H^{12}\in M_{n,2}(\R)$ be a matrix with only non-positive coefficients. We also define the matrix $\overline{H}^{12}\in M_{n,1}(\R)$ by:
\[
\overline{H}^{12}=  H^{12} \left(\begin{matrix} 1 \\ 1 \end{matrix}\right).
\]
Now let $H\in M_{n+2}(\R)$ and $\overline{H}\in M_{n+1}(\R)$ be two symmetric, positive definite matrices with non-positive off-diagonal coefficients such that they have the following bloc decomposition:
\[
H=\left(\begin{matrix} H^{11} & H^{12} \\ \T{H^{12}} & H^{22} \end{matrix}\right)
\text{ and } 
\overline{H}=\left(\begin{matrix} H^{11} & \overline{H}^{12} \\ \T{\overline{H}^{12}} & \overline{H}^{22} \end{matrix}\right).
\]
Let $G$ and $\overline{G}$ be the inverse of $H$ and $\overline{H}$ respectively. We use the same bloc decomposition:
\[
G=:\left(\begin{matrix} G^{11} & G^{12} \\ \T{G^{12}} & G^{22} \end{matrix}\right)
\text{ and } 
\overline{G}:=\left(\begin{matrix} G^{11} & \overline{G}^{12} \\ \T{\overline{G}^{12}} & \overline{G}^{22} \end{matrix}\right).
\]
For any vector $X\in\R^{n+2}$ we define the vector $\overline{X}\in\R^{n+1}$ by:
\[
\begin{aligned}
\forall i\in[\![1,n]\!],\ \overline{X}_i:=X_i \text{ and }\\
\overline{X}_{n+1}:=X_{n+1}+X_{n+2}.
\end{aligned}
\]
For any vectors $X^1,X^2\in[0,\infty)^{n+2}$ we can define:
\begin{itemize}
\item $\alpha_1(X^1)\geq 0$ and $\alpha_2(X^1)\geq 0$ that only depend on $X^1,H^{11}$ and $H^{12}$, 
\item $\alpha_1(X^2)\geq 0$ and $\alpha_2(X^2)\geq 0$ that only depend on $X^2,H^{11}$ and $H^{12}$,
\item $C(X^1,X^2)\geq 0$ that only depends on $X^1,X^2,H^{11}$ and $H^{12}$ (but not $H^{22}$),
\end{itemize}
such that:
\[
\begin{aligned}
\T{X^1} G X^2 =&  C(X^1,X^2) + \left(\begin{matrix} \alpha_1(X^1) & \alpha_2(X^1) \end{matrix}\right) 
G^{22}
\left(\begin{matrix} \alpha_1(X^2) \\ \alpha_2(X^2) \end{matrix}\right)\\[4pt]
\T{\overline{X}^1} \overline{G} \overline{X}^2 =& 
C(X^1,X^2) + (\alpha_1(X^1) +\alpha_2(X^1)) \overline{G}^{22} (\alpha_1(X^2) +\alpha_2(X^2)).
\end{aligned}
\]
\end{lem}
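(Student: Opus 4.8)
The plan is to write both bilinear forms in closed form by applying the Schur decomposition (Lemma~\ref{lem:Schur}) to $H$ and to $\overline H$, and then to read off $\alpha_1,\alpha_2$ and $C$ from the answer. For a vector $X\in\R^{n+2}$ denote by $P(X)\in\R^n$ its first $n$ coordinates and by $Q(X)\in\R^2$ its last two, so that $\overline X$ has first block $P(X)$ and last coordinate $Q(X)_1+Q(X)_2$. Since $H$ is positive definite, its principal submatrix $H^{11}$ is positive definite, and the Schur complement $H^{22}-\T{(H^{12})}(H^{11})^{-1}H^{12}$ is positive definite as well; Lemma~\ref{lem:Schur} then applies and, identifying the bottom-right block of $G=H^{-1}$ with $G^{22}$, yields $G^{12}=-(H^{11})^{-1}H^{12}G^{22}$ and $G^{11}=(H^{11})^{-1}+(H^{11})^{-1}H^{12}G^{22}\T{(H^{12})}(H^{11})^{-1}$, and similarly for $\overline G$ with $\overline H^{12},\overline H^{22}$ in place of $H^{12},H^{22}$.

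Substituting these into the block expansion $\T{X^1}GX^2=\T{P^1}G^{11}P^2+\T{P^1}G^{12}Q^2+\T{Q^1}\T{(G^{12})}P^2+\T{Q^1}G^{22}Q^2$ (with $P^i:=P(X^i)$, $Q^i:=Q(X^i)$) and using $\T{(G^{12})}=-G^{22}\T{(H^{12})}(H^{11})^{-1}$, the four terms collapse to
\[
\T{X^1}GX^2=\T{P^1}(H^{11})^{-1}P^2+\T{\bigl(Q^1-\T{(H^{12})}(H^{11})^{-1}P^1\bigr)}\,G^{22}\,\bigl(Q^2-\T{(H^{12})}(H^{11})^{-1}P^2\bigr).
\]
This dictates the definitions: let $\bigl(\alpha_1(X),\alpha_2(X)\bigr)$ be the two entries of $Q(X)-\T{(H^{12})}(H^{11})^{-1}P(X)\in\R^2$, and let $C(X^1,X^2):=\T{P^1}(H^{11})^{-1}P^2$; these depend only on the stated data (indeed $C$ does not even involve $H^{12}$). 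For the non-negativity, $H^{11}$ is symmetric positive definite with non-positive off-diagonal entries, so by Lemma~\ref{lem:connection} every entry of $(H^{11})^{-1}$ is $\geq0$; since $X^1,X^2\in[0,\infty)^{n+2}$ the vectors $P^i,Q^i$ are entrywise non-negative, and since $H^{12}$ has non-positive entries we get $-\T{(H^{12})}(H^{11})^{-1}P^i\geq0$ entrywise, whence $\alpha_1(X^i),\alpha_2(X^i)\geq0$ and $C(X^1,X^2)\geq0$.

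Running the identical computation for $\overline H$ — whose $(2,2)$ block $\overline H^{22}$ is a scalar, so $\overline G^{22}$ is a scalar — and setting $\overline\alpha(\overline X):=\bigl(Q(X)_1+Q(X)_2\bigr)-\T{(\overline H^{12})}(H^{11})^{-1}P(X)$, the same rearrangement gives $\T{\overline{X}^1}\,\overline G\,\overline{X}^2=\T{P^1}(H^{11})^{-1}P^2+\overline\alpha(\overline X^1)\,\overline G^{22}\,\overline\alpha(\overline X^2)$, so the constant term is exactly the same $C(X^1,X^2)$. It only remains to identify $\overline\alpha$ with $\alpha_1+\alpha_2$: from $\overline H^{12}=H^{12}\,\T{(1,1)}$ one has $\T{(\overline H^{12})}(H^{11})^{-1}P=(1,1)\,\T{(H^{12})}(H^{11})^{-1}P$, i.e.\ the sum of the two entries of $\T{(H^{12})}(H^{11})^{-1}P$; together with $Q(X)_1+Q(X)_2$ this gives $\overline\alpha(\overline X)=\alpha_1(X)+\alpha_2(X)$, which is the second asserted identity. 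The computation is essentially routine; the only points needing attention are the sign bookkeeping when collapsing the Schur expansion, and the use of the M-matrix (inverse-non-negativity) property of $H^{11}$ to guarantee that $\alpha_1,\alpha_2$ and $C$ are non-negative.
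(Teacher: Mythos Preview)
Your proof is correct and follows essentially the same route as the paper: apply the Schur decomposition to $H$ (and to $\overline H$), collapse the block expansion of $\T{X^1}GX^2$ into the form $\T{P^1}(H^{11})^{-1}P^2$ plus a $G^{22}$-quadratic term, then read off $C$ and the $\alpha_i$, using the inverse-non-negativity of the M-matrix $H^{11}$ to check the sign conditions. The paper packages the same computation via the matrix $M:=-\T{(H^{12})}(H^{11})^{-1}$ and a factorization of $G$ as a product of triangular matrices, but the content is identical.
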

The previous lemma allows us to transform the expression $\T{X}^1 G X^{2}$ in the form $A + \T{Y}^1 G^{22} Y^2$. The properties of the family of law $\nu$ (\ref{lem:VRJPNu0}) tell us that the study of $G^{22}$ knowing $A,Y^1$ and $Y^2$ is the same as the study of $\nu_2$ for some parameters. This means that if we get some monotonicity for $\nu_2$ we should be able to get it back for $\nu_n$ for any $n$.
\begin{proof}[proof of lemma \ref{lem:downto2}]
First we look at $H$. Let $G$ be the inverse of $H$. We use the same bloc decomposition as for $H$:
\[
G=\left(\begin{matrix} G^{11} & G^{12} \\ G^{21} & G^{22} \end{matrix}\right),
\]
where $G^{11}\in M_n(\R),\ G^{12}\in M_{n,2}(\R),\ G^{21}\in M_{2,n}(\R)$ and $G^{22}\in M_{2}(\R)$.
By Schur decomposition \ref{lem:Schur} we have:
\[
\begin{aligned}
G=& \left( \begin{matrix} (H^{11})^{-1}+(H^{11})^{-1}H^{12} G^{22} \T{H^{12}} (H^{11})^{-1} & -(H^{11})^{-1}H^{12} G^{22}\\ 
-G^{22} \T{H^{12}}(H^{11})^{-1} & G^{22} \end{matrix} \right)\\
=&\left(\begin{matrix} I_n &  -(H^{11})^{-1}H^{12} \\ 0 & I_2 \end{matrix}\right)
\left(\begin{matrix} (H^{11})^{-1} &  0 \\ 0 & G^{22} \end{matrix}\right)
\left(\begin{matrix} I_n &  0 \\ -H^{21}(H^{11})^{-1} & I_2 \end{matrix}\right)
\end{aligned}
\]
By definition of $H$, all the coefficients of $-H^{12}$ are non-negative and all the coefficients of $(H^{11})^{-1}$ are also non-negative since $H^{11}$ is an M-matrix. This means that all the coefficients of $-(H^{11})^{-1}H^{12}$ are non-negative. Let $X^1,X^2\in \R^{n+2}$ be two vectors with the following bloc decomposition:
\[
X^1:=\left(\begin{matrix}X^{11} \\ X^{12} \end{matrix}\right) \text{ and }
X^2:=\left(\begin{matrix}X^{21} \\ X^{22} \end{matrix}\right),
\]
where $ X^{11},X^{21}\in \R^n$ and $ X^{12},X^{22}\in \R^2$. Let $M:=-H^{21}(H^{11})^{-1}$. We have:
\[
\begin{aligned}
\T{X^1} G X^2
=& \left(\begin{matrix}\T{X}^{11} & \T{X}^{12} \end{matrix}\right) 
\left(\begin{matrix} I_n &  -(H^{11})^{-1}H^{12} \\ 0 & I_2 \end{matrix}\right)
\left(\begin{matrix} (H^{11})^{-1} &  0 \\ 0 & G^{22} \end{matrix}\right)
\left(\begin{matrix} I_n &  0 \\ -H^{21}(H^{11})^{-1} & I_2 \end{matrix}\right)
\left(\begin{matrix}X^{21} \\ X^{22} \end{matrix}\right) \\
=& \left(\begin{matrix}\T{X}^{11} & \T{X}^{12} \end{matrix}\right) 
\left(\begin{matrix} I_n &  M \\ 0 & I_2 \end{matrix}\right)
\left(\begin{matrix} (H^{11})^{-1} &  0 \\ 0 & G^{22} \end{matrix}\right)
\left(\begin{matrix} I_n &  0 \\ M & I_2 \end{matrix}\right)
\left(\begin{matrix}X^{21} \\ X^{22} \end{matrix}\right) \\
=& \left(\begin{matrix} \T{X}^{11} & \T{X}^{11} \T{M} +\T{X}^{12} \end{matrix}\right) 
\left(\begin{matrix} (H^{11})^{-1} &  0 \\ 0 & G^{22} \end{matrix}\right)
\left(\begin{matrix}X^{21} \\ MX^{21} +X^{22} \end{matrix}\right)\\
=& \T{X}^{11}(H^{11})^{-1}X^{21}+ (\T{X}^{11} \T{M} +\T{X}^{12}) G^{22} (MX^{21} +X^{22})\\
=& \T{X}^{11}(H^{11})^{-1}X^{21}+ \T{(M X^{11}  +X^{12})} G^{22} (MX^{21} +X^{22}).
\end{aligned}
\]
Now we can define $\alpha_1(X^1),\alpha_2(X^1),\alpha_1(X^2)$ and $\alpha_2(X^2)$ by:
\[
\left(\begin{matrix}\alpha_1(X^1) \\ \alpha_2(X^1) \end{matrix}\right):= M X^{11}  +X^{12} \text{ and }
\left(\begin{matrix}\alpha_1(X^2) \\ \alpha_2(X^2) \end{matrix}\right):=MX^{21} +X^{22}.
\]
We also define $C(X^1,X^2)$ by $C(X^1,X^2):=\T{X}^{11}(H^{11})^{-1}X^{21}$. We get:
\[
\T{X^1} G X^2 =  C(X^1,X^2) + \left(\begin{matrix} \alpha_1(X^1) & \alpha_2(X^1) \end{matrix}\right) 
G^{22}
\left(\begin{matrix} \alpha_1(X^2) \\ \alpha_2(X^2) \end{matrix}\right).
\]
Similarly, we get:
\[
\begin{aligned}
\T{\overline{X}^1} \overline{G} \overline{X}^2
=&\T{X}^{11}(H^{11})^{-1}X^{21}+ \T{(-\overline{H}^{21}(H^{11})^{-1} X^{11}  +\overline{X}^{12})} \overline{G}^{22} (-\overline{H}^{21}(H^{11})^{-1}X^{21} +\overline{X}^{22})\\
=&C(X^1,X^2) + \left( \alpha_1(X^1) + \alpha_2(X^1) \right) 
\overline{G}^{22}
\left( \alpha_1(X^2) + \alpha_2(X^2) \right)
\end{aligned}
\]
\end{proof}

\section{The coupling}

\subsection{A change of variables}

When we look at $\nu_2$, instead of looking at the beta-field $(\beta_1,\beta_2)$ we will look at two other variables that will make our coupling and various calculations more explicit. In the following lemma we state this change of variables and some relevant properties of the new variables.
\begin{lem}\label{lem:coupling}
We set a parameter $\lambda\in [0,1]$ and a parameter $w\geq 0$ such that if $w=0$ then $\lambda\not\in\{0,1\}$. Let $W:=\left(\begin{matrix} 0 & w \\ w & 0\end{matrix}\right)$. Let $(\beta_1,\beta_2)$ be distributed according to $\nu_2^{W,0}$. We define the variables $\gamma$ and $Z$ by:
\[
\begin{aligned}
\gamma:=& \frac{1}{\left(\begin{matrix}\lambda & 1-\lambda\end{matrix}\right)\left(\begin{matrix}2\beta_1 & -w \\ -w & 2\beta_2\end{matrix}\right)^{-1}\left(\begin{matrix}\lambda \\ 1-\lambda\end{matrix}\right)} = \frac{4\beta_1\beta_2 -w^2}{2w\lambda(1-\lambda)+2\beta_2 \lambda^2 + 2 \beta_1 (1-\lambda)^2},\\
Z:=&\frac{2\beta_1 -\lambda^2 \gamma}{w+\lambda(1-\lambda)\gamma}.
\end{aligned}
\]
We have that both $Z$ and $\gamma$ are positive and:
\[
\begin{aligned}
2\beta_1 =& \lambda^2 \gamma + (w+\lambda(1-\lambda)\gamma)Z, \\
2\beta_2 =&   (1-\lambda)^2 \gamma + (w+\lambda(1-\lambda)\gamma)\frac{1}{Z}.
\end{aligned}
\]
The random variable $\gamma$ is the only random variable such that:
\[
\left(\begin{matrix}2\beta_1 & -w \\ -w & 2\beta_2 \end{matrix}\right)-\gamma\left(\begin{matrix} \lambda^2 & \lambda(1-\lambda) \\ \lambda(1-\lambda) & (1-\lambda)^2\end{matrix}\right)
\]
is of rank one. The law of $\gamma$ is that of a Gamma of parameter $(\frac{1}{2},\frac{1}{2})$. The law of $Z$, knowing $\gamma$ is given by:
\[
\frac{\sqrt{W+\lambda(1-\lambda)\gamma}}{\sqrt{2\pi}}\exp\left(- (W+\lambda(1-\lambda)\gamma)\frac{(z-1)^2}{2z}\right)\frac{1}{z}\left((1-\lambda)\sqrt{z}+\frac{\lambda}{\sqrt{z}}\right)1_{z>0}\dd z.
\]  
This law is a mixture of an inverse gaussian law and its inverse.\\
If $U$ is defined by $U:=\sqrt{Z}-\frac{1}{\sqrt{Z}}$, its density, knowing $\gamma$, is given by:
\[
\frac{\sqrt{w+\lambda(1-\lambda)\gamma}}{\sqrt{2\pi}}\exp\left(- (w+\lambda(1-\lambda)\gamma)\frac{u^2}{2}\right)\left(1-(2\lambda-1)\frac{u}{\sqrt{u^2+4}}\right)\dd u.
\]
This law is similar to a gaussian, in particular the law of $|U|$ is that of the absolute value of a gaussian.\\
We also have the following equality:
\[
\text{det}\left(\begin{matrix} 2\beta_1 & -w \\ -w & 2\beta_2 \end{matrix}\right)
=4\beta_1\beta_2-w^2=(w+\lambda(1-\lambda)\gamma)\gamma \left((1-\lambda)\sqrt{Z}+\frac{\lambda}{\sqrt{Z}}\right)^2.
\]
\end{lem}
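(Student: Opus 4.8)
The plan is to treat this as a direct but carefully organized change-of-variables computation on the two-dimensional density $\nu_2^{W,0}$, keeping $\lambda$ and $w$ as fixed parameters. First I would write out $H_\beta = \left(\begin{smallmatrix} 2\beta_1 & -w \\ -w & 2\beta_2\end{smallmatrix}\right)$ explicitly and record the algebraic identities that do not involve any probability: namely, starting from the stated definitions of $\gamma$ and $Z$, I would invert them to get the two displayed formulas $2\beta_1 = \lambda^2\gamma + (w+\lambda(1-\lambda)\gamma)Z$ and $2\beta_2 = (1-\lambda)^2\gamma + (w+\lambda(1-\lambda)\gamma)Z^{-1}$. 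The cleanest route is to observe that $\gamma$ is defined precisely so that $1/\gamma = \binom{\lambda}{1-\lambda}^{\!t} H_\beta^{-1}\binom{\lambda}{1-\lambda}$, and then verify that $H_\beta - \gamma\, v\,\T v$, with $v = \binom{\lambda}{1-\lambda}$ — wait, more precisely $H_\beta - \gamma \left(\begin{smallmatrix}\lambda^2 & \lambda(1-\lambda)\\ \lambda(1-\lambda) & (1-\lambda)^2\end{smallmatrix}\right)$ — is a rank-one matrix. This follows from the Sherman--Morrison formula: if one subtracts from a positive matrix $H$ the rank-one piece $\gamma v\T v$ with $\gamma^{-1} = \T v H^{-1} v$, the result is singular, and conversely $\gamma$ is the unique scalar making it singular since the map $t \mapsto \det(H - t\, v\T v) = \det(H)(1 - t\,\T v H^{-1}v)$ is affine in $t$ with a single root. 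Writing out that rank-one matrix and naming $Z$ as the ratio of its $(1,1)$ entry to its off-diagonal entry (equivalently the other displayed formula for $Z$) gives the inversion formulas, and reading off the $(2,2)$ entry gives the determinant identity $4\beta_1\beta_2 - w^2 = \det H_\beta = (w+\lambda(1-\lambda)\gamma)\gamma\,\big((1-\lambda)\sqrt Z + \lambda/\sqrt Z\big)^2$ at the end. Positivity of $\gamma$ and $Z$ is immediate from $H_\beta > 0$ (all principal minors positive) once the formulas are in hand.

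Next I would compute the Jacobian of the map $(\beta_1,\beta_2)\mapsto(\gamma,Z)$. Using the inversion formulas it is easier to differentiate $(2\beta_1, 2\beta_2)$ with respect to $(\gamma,Z)$: one gets a $2\times 2$ matrix whose determinant, after simplification, should come out proportional to $(w+\lambda(1-\lambda)\gamma)\cdot\frac{1}{Z}\big((1-\lambda)\sqrt Z + \lambda/\sqrt Z\big)$ times an elementary factor — this is exactly the combination that will reproduce the claimed density for $Z$. Then I substitute into the density $\nu_2^{W,0}(d\beta_1 d\beta_2) = e^{-\frac12(1_2 H_\beta \T{1_2})}\,\det(H_\beta)^{-1/2}\,1_{H_\beta>0}\,d\beta_1 d\beta_2$ (noting $\eta=0$ so the $\eta$-terms drop). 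I compute $1_2 H_\beta \T{1_2} = 2\beta_1 + 2\beta_2 - 2w = \gamma\big(\lambda^2 + (1-\lambda)^2\big) + (w+\lambda(1-\lambda)\gamma)(Z + Z^{-1}) - 2w$, and I use the determinant identity for the $\det(H_\beta)^{-1/2}$ factor; combining with the Jacobian, the $Z$-dependence should factor off from the $\gamma$-dependence. Collecting the $\gamma$-only terms I expect to recognize the $\mathrm{Gamma}(\tfrac12,\tfrac12)$ density (up to the normalizing constant, which I can fix by knowing $\nu_2$ is a probability measure, or by the Gaussian integral over $Z$), and the conditional density of $Z$ given $\gamma$ should match the stated mixture of an inverse Gaussian and its inverse. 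The identity $(Z+Z^{-1}) - 2 = (\sqrt Z - 1/\sqrt Z)^2 = U^2$ is what turns the exponent into the Gaussian-in-$U$ form, and the substitution $Z \mapsto U = \sqrt Z - 1/\sqrt Z$ (with $\sqrt Z = (U + \sqrt{U^2+4})/2$, $dZ/Z = dU\cdot\sqrt{U^2+4}/(U+\sqrt{U^2+4})$ ... more cleanly $\frac{1}{z}\frac{dz}{du}$ combines with the $((1-\lambda)\sqrt z + \lambda/\sqrt z)$ prefactor) yields the stated density of $U$; the factor $((1-\lambda)\sqrt z + \lambda/\sqrt z)/\sqrt{\cdots}$ collapses to $1 - (2\lambda-1)u/\sqrt{u^2+4}$, and since this factor integrates against the symmetric Gaussian to give total mass $1$ with the odd part integrating to zero, the law of $|U|$ is a half-Gaussian.

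Finally I would handle the degenerate case $w=0$ with $\lambda\notin\{0,1\}$ separately only to the extent that the formulas require $w + \lambda(1-\lambda)\gamma > 0$, which still holds; and the boundary $\lambda\in\{0,1\}$ with $w>0$ also causes no division-by-zero. The main obstacle I anticipate is purely computational: carrying the change of variables through without sign errors or dropped factors, in particular checking that the Jacobian determinant and the $\det(H_\beta)^{-1/2}$ factor conspire to leave exactly the claimed $\tfrac{1}{z}\big((1-\lambda)\sqrt z + \lambda/\sqrt z\big)$ weight in the conditional law of $Z$, and that the normalizing constants are consistent with $\gamma \sim \mathrm{Gamma}(\tfrac12,\tfrac12)$. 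A secondary point requiring care is the rank-one uniqueness claim, but as noted this reduces to the affineness of $t\mapsto\det(H_\beta - t\,\Lambda)$ where $\Lambda = \left(\begin{smallmatrix}\lambda^2 & \lambda(1-\lambda)\\ \lambda(1-\lambda) & (1-\lambda)^2\end{smallmatrix}\right)$ has rank one, so there is a unique $t=\gamma$ making it singular, and singular-plus-the-given-trace-structure forces the stated factorization.
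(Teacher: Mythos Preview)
Your proposal is correct and follows essentially the same route as the paper: the paper also sets up the forward map $f:(c,z)\mapsto(\beta_1,\beta_2)$, uses the matrix determinant lemma (your Sherman--Morrison argument) to get uniqueness of $\gamma$ and the rank-one property, computes the Jacobian of $f$, derives the determinant identity and $\beta_1+\beta_2-w$ in the new variables, and then reads off the joint density of $(\gamma,Z)$ and the conditional density of $U$ via the same substitution. The only cosmetic difference is that the paper organizes the bijection by first checking $f((0,\infty)^2)\subset\mathcal{H}$ and then exhibiting the explicit inverse, whereas you start from the definitions of $\gamma,Z$ and invert; the computations are identical.
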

The random variable $\gamma$ is a generalization of the random variable $\gamma$ defined in $\cite{VRJPNu0}$, in which it is only defined for $\lambda\in\{0,1\}$. It is used to make a link between the $\beta$-field and the VRJP starting at a specific point.
\begin{proof}
Let $\mathcal{H}\subset(0,\infty)^2$ be the set defined by:
\[
\mathcal{H}:=\left\{(b_1,b_2)\in(0,\infty)^2, \left(\begin{matrix}2b_1 & -w \\ -w & 2b_2\end{matrix}\right)>0\right\}.
\]
Let $f:(0,\infty)^2\mapsto \R^2$ be the function defined by:
\[
f(c,z):= \left(\frac{\lambda^2 c + (w+\lambda(1-\lambda)c)z}{2},    \frac{(1-\lambda)^2 c + (w+\lambda(1-\lambda)c)\frac{1}{z}}{2}\right).
\]
First we need to check that $f\left((0,\infty)^2\right)\subset \mathcal{H}$. First, $\frac{\lambda^2 c + (w+\lambda(1-\lambda)c)z}{2}>0$ and $\frac{(1-\lambda)^2 c + (w+\lambda(1-\lambda)c)\frac{1}{z}}{2}>0$. Then:
\[
4\frac{\lambda^2 c + (w+\lambda(1-\lambda)c)z}{2}\frac{(1-\lambda)^2 c + (w+\lambda(1-\lambda)c)\frac{1}{z}}{2} -w^2 >wz w\frac{1}{z} -w^2>0.
\]
This means that $f\left((0,\infty)^2\right)\subset \mathcal{H}$. \\
Now we need a small result on matrices that will make calculations on $f$ simpler. Let $Y:=\left(\begin{matrix} \lambda \\ 1-\lambda \end{matrix}\right)$. For any $(a_1,a_2)\in\mathcal{H}$ and $s\in\R$, we have:
\[
\begin{aligned}
\text{det}\left(\left(\begin{matrix}2a_1 & -w \\ -w & 2a_2\end{matrix}\right)-sY\T{Y}\right)
=&\text{det}\left(\begin{matrix}2a_1 & -w \\ -w & 2a_2\end{matrix}\right)\text{det}\left(I_2-s\left(\begin{matrix}2a_1 & -w \\ -w & 2a_2\end{matrix}\right)^{-1}Y\T{Y}\right)\\
=&\text{det}\left(\begin{matrix}2a_1 & -w \\ -w & 2a_2\end{matrix}\right)\text{det}\left(1-s\T{Y}\left(\begin{matrix}2a_1 & -w \\ -w & 2a_2\end{matrix}\right)^{-1}Y\right)\\
=&\text{det}\left(\begin{matrix}2a_1 & -w \\ -w & 2a_2\end{matrix}\right)\left(1-s\T{Y}\left(\begin{matrix}2a_1 & -w \\ -w & 2a_2\end{matrix}\right)^{-1}Y\right).
\end{aligned}
\] 
This means that 
\[
\text{det}\left(\left(\begin{matrix}2a_1 & -w \\ -w & 2a_2\end{matrix}\right)-sY\T{Y}\right)=0\Leftrightarrow s= \frac{1}{\T{Y}\left(\begin{matrix}2a_1 & -w \\ -w & 2a_2\end{matrix}\right)^{-1}Y}.
\]
Now we notice that if $(b_1,b_2):=f(c,z)$ then
\[
\left(\begin{matrix}2b_1 & -w \\ -w & 2b_2\end{matrix}\right)-cY\T{Y}=\left(\begin{matrix}(w+\lambda(1-\lambda)c)z & -(w+\lambda(1-\lambda)c) \\ -(w+\lambda(1-\lambda)c) & (w+\lambda(1-\lambda)c)\frac{1}{z}\end{matrix}\right),
\]
which is of rank one, and the eigenvector for the non-zero eigenvalue is $\left(\begin{matrix} \sqrt{z} \\ \frac{1}{\sqrt{z}}\end{matrix}\right)$.\\
Therefore if we know that $(b_1,b_2)=f(c,z)$ then
\[
\begin{aligned}
c&= \frac{1}{\T{Y}\left(\begin{matrix}2b_1 & -w \\ -w & 2b_2\end{matrix}\right)^{-1}Y}
=\frac{4b_1b_2-w^2}{2b_2\lambda^2+2b_1(1-\lambda)^2+2\lambda(1-\lambda)w},\text{ and}\\
z&=\frac{2b_1-\lambda^2c}{w+\lambda(1-\lambda)\gamma}=\frac{w+\lambda(1-\lambda)\gamma}{2b_2-(1-\lambda)^2c}.
\end{aligned}
\]
This means that $f$ is injective and its inverse is the one we want. Conversely, $f$ is surjective by using the same formula.\\
The Jacobian $J_f$ of the change of variables $f$ is equal to:
\[
J_f(c,z)=\left(
\begin{matrix}
\left(\lambda^2 + \lambda(1-\lambda)z\right)\frac{1}{2} & \left((1-\lambda)^2 + \lambda(1-\lambda)\frac{1}{z}\right)\frac{1}{2}\\[+4pt]
(w+\lambda(1-\lambda)c)\frac{1}{2} & -(w+\lambda(1-\lambda)c)\frac{1}{2z^2}
\end{matrix}
\right)
\] 
and therefore the determinant $D_f$ of the Jacobian is equal to :
\[
\begin{aligned}
D_f(c,z)=&\frac{w+\lambda(1-\lambda)c}{4}\left((1-\lambda)^2 + \lambda(1-\lambda)\frac{1}{z}+\lambda^2\frac{1}{z^2} + \lambda(1-\lambda)\frac{1}{z}\right)\\
=&\frac{w+\lambda(1-\lambda)c}{4}\left(1-\lambda+\frac{\lambda}{z}\right)^2\\
=&\frac{w+\lambda(1-\lambda)c}{4}\frac{1}{z}\left((1-\lambda)\sqrt{z}+\frac{\lambda}{\sqrt{z}}\right)^2.
\end{aligned}
\]
Now we can change variables $(\beta_1,\beta_2)$ such that $H_{\beta}:=\left(\begin{matrix}2b_1 & -w \\ -w & 2b_2 \end{matrix}\right) >0$ into variables $(\gamma,z)$ defined by:
\[
\begin{aligned}
\gamma:=& \frac{4\beta_1\beta_2 -w^2}{2w\lambda(1-\lambda)+2\beta_2 \lambda^2 + 2 \beta_1 (1-\lambda)^2},\\
z:=&\frac{2\beta_1 -\lambda^2 \gamma}{w+\lambda(1-\lambda)\gamma}.
\end{aligned}
\]
We need to make a few calculations before we can express the law of $(\gamma,Z)$. First we have, for any $(c,z)\in (0,\infty)^2$, with $(b_1,b_2):=f(c,z)$:
\[
\begin{aligned}
&4b_1b_2 -w^2 \\
=& \left( (w+\lambda(1-\lambda)c)z+\lambda^2 c\right)\left( (w+\lambda(1-\lambda)c)\frac{1}{z}+(1-\lambda )^2 c\right) - w^2\\
=&  (w+\lambda(1-\lambda)c)^2 + (w+\lambda(1-\lambda)c)\left(\lambda^2 c\frac{1}{z}+(1-\lambda )^2c z\right) +\lambda^2(1-\lambda)^2c^2-w^2 \\
=&  (\lambda(1-\lambda)c)^2 + 2w\lambda(1-\lambda)c + (w+\lambda(1-\lambda)c)\left(\lambda^2 c\frac{1}{x}+(1-\lambda )^2c x\right) +\lambda^2(1-\lambda)^2c^2 \\
=&2c(w+\lambda(1-\lambda)c) + (w+\lambda(1-\lambda)c)c\left(\lambda^2 \frac{1}{z}+(1-\lambda )^2 z\right) +\lambda^2(1-\lambda)^2c^2\\
=& (w+\lambda(1-\lambda)c)c\left(\lambda^2 \frac{1}{z}+(1-\lambda )^2 z+2\right) \\
=&(w+\lambda(1-\lambda)c)c\left((1-\lambda)\sqrt{z}+\frac{\lambda}{\sqrt{z}}\right)^2 .
\end{aligned}
\]
Therefore we get:
\[
\frac{D_f(c,z)}{\sqrt{4b_1b_2-w^2}}
=\frac{\sqrt{w+\lambda(1-\lambda)c}}{4\sqrt{c}}\frac{1}{z}\left((1-\lambda)\sqrt{z}+\frac{\lambda}{\sqrt{z}}\right).
\]
We also have the following equality:
\[
\begin{aligned}
b_1+b_2-w
=& \lambda^2 \frac{c}{2} + (w+\lambda(1-\lambda)c)\frac{z}{2} + (1-\lambda )^2 \frac{c}{2} + (w+\lambda(1-\lambda)c)\frac{1}{2z}\\
 &\ \ \ \ - ((w+\lambda(1-\lambda)c)-\lambda(1-\lambda)c) \\
=& (\lambda^2+(1-\lambda)^2+2)\frac{c}{2} + \frac{1}{2}(w+\lambda(1-\lambda)c)\left(z+\frac{1}{z}-2\right)\\
=& \frac{c}{2} + \frac{1}{2}(w+\lambda(1-\lambda)c)\frac{1}{z}\left(z-1\right)^2.
\end{aligned}
\]
And therefore we get the following joint law for $\gamma$ and $Z$ ($c$ represents $\gamma$ and $z$ represents $Z$):
\[
\frac{2}{\pi}\frac{\sqrt{w+\lambda(1-\lambda)c}}{4\sqrt{c}}\frac{1}{z}\left((1-\lambda)\sqrt{z}+\frac{\lambda}{\sqrt{z}}\right)
\exp\left(-\frac{c}{2} - (w+\lambda(1-\lambda)c)\frac{(z-1)^2}{2z}\right)\dd z \dd c.
\]
In particular, the law of $Z$, knowing $\gamma$, is given by
\[
\frac{\sqrt{w+\lambda(1-\lambda)\gamma}}{\sqrt{2\pi}}\exp\left(- (w+\lambda(1-\lambda)\gamma)\frac{(z-1)^2}{2z}\right)\frac{1}{z}\left((1-\lambda)\sqrt{z}+\frac{\lambda}{\sqrt{z}}\right)\dd z.
\]
It is indeed a density since it is a mixture of an inverse gaussian and the inverse of an inverse gaussian. Now, we can look at the law of $U$. By definition, $U=\sqrt{Z}-\frac{1}{\sqrt{Z}}$. This means that $\sqrt{Z}=\frac{\sqrt{U^2+4}+U}{2}$ and $\frac{1}{\sqrt{Z}}=\frac{\sqrt{U^2+4}-U}{2}$. We therefore have $Z=\frac{U^2+2+U\sqrt{U^2+4}}{2}$. The density of $U$ is thus:
\[
\begin{aligned}
& \frac{1}{2}\left(2u+\sqrt{u^2+4} + \frac{u^2}{\sqrt{u^2+4}}\right)\frac{\sqrt{w+\lambda(1-\lambda)\gamma}}{\sqrt{2\pi}}\exp\left(- (w+\lambda(1-\lambda)\gamma)\frac{u^2}{2}\right)\\
&\ \ \ \ \times\frac{2}{u^2+2+u\sqrt{u^2+4}}\left((1-\lambda)\frac{\sqrt{u^2+4}+u}{2}+\lambda\frac{\sqrt{u^2+4}-u}{2}\right)\dd u\\
=& \frac{2u\sqrt{u^2+4}+2u^2+4}{2\sqrt{u^2+4}}\frac{\sqrt{w+\lambda(1-\lambda)\gamma}}{\sqrt{2\pi}}\exp\left(- (w+\lambda(1-\lambda)\gamma)\frac{u^2}{2}\right)\\
&\ \ \ \ \times\frac{2}{u^2+2+u\sqrt{u^2+4}}\left((1-\lambda)\frac{\sqrt{u^2+4}+u}{2}+\lambda\frac{\sqrt{u^2+4}-u}{2}\right)\dd u\\
=& \frac{\sqrt{w+\lambda(1-\lambda)\gamma}}{\sqrt{2\pi}}\exp\left(- (w+\lambda(1-\lambda)\gamma)\frac{u^2}{2}\right)\left(1-(2\lambda-1)\frac{u}{\sqrt{u^2+4}}\right)\dd u.
\end{aligned}
\]
\end{proof}

\subsection{The tilted gaussian law}

\begin{defin}
For any $(K,\delta)\in(0,\infty)\times\left[-1,1\right]$ we define the tilted gaussian law $\tilde{\mathcal{N}}(K,\delta)$ by the following density:
\[
\sqrt{\frac{K}{2\pi}}\exp\left(-\frac{Ku^2}{2}\right)\left(1+\delta\frac{u}{\sqrt{u^2+4}}\right)\dd u.
\]
It is indeed a density since it is the density of a gaussian plus an antisymmetric term that is smaller than the gaussian term.
\end{defin}

\begin{lem}\label{lem:tiltedgaussianesp}
Set $K>0$ and $\delta,\delta^{\prime}\in[-1,1]$. Let $U$ be a random variable distributed according to $\tilde{\mathcal{N}}(K,\delta)$. We have the following equality:
\[
\E\left(\frac{1+\delta^{\prime}\frac{U}{\sqrt{(U)^2+4}}}{1+\delta\frac{U}{\sqrt{(U)^2+4}}}\right)=1.
\]
\end{lem}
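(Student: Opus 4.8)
The statement is essentially a one-line computation once one observes that the ``tilting factor'' in the density of $\tilde{\mathcal{N}}(K,\delta)$ is exactly the denominator appearing in the expectation. The plan is as follows.

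First I would write the expectation out as an integral against the density given in the definition of $\tilde{\mathcal{N}}(K,\delta)$:
\[
\E\left(\frac{1+\delta'\frac{U}{\sqrt{U^2+4}}}{1+\delta\frac{U}{\sqrt{U^2+4}}}\right)
=\int_{\R}\frac{1+\delta'\frac{u}{\sqrt{u^2+4}}}{1+\delta\frac{u}{\sqrt{u^2+4}}}\,\sqrt{\frac{K}{2\pi}}\,e^{-Ku^2/2}\left(1+\delta\frac{u}{\sqrt{u^2+4}}\right)\dd u .
\]
Before cancelling, I would check that the denominator never vanishes on the domain of integration: since $\left|\frac{u}{\sqrt{u^2+4}}\right|<1$ for every $u\in\R$ and $|\delta|\le 1$, we have $1+\delta\frac{u}{\sqrt{u^2+4}}\ge 1-\left|\frac{u}{\sqrt{u^2+4}}\right|>0$, so the integrand of the expectation is well-defined everywhere and the factor $1+\delta\frac{u}{\sqrt{u^2+4}}$ cancels legitimately.

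After cancellation the integral becomes
\[
\int_{\R}\sqrt{\frac{K}{2\pi}}\,e^{-Ku^2/2}\left(1+\delta'\frac{u}{\sqrt{u^2+4}}\right)\dd u ,
\]
which is precisely the total mass of the density of $\tilde{\mathcal{N}}(K,\delta')$, hence equal to $1$ (this is exactly the statement, recalled in the definition, that $\tilde{\mathcal{N}}(K,\delta')$ is a probability law; alternatively, $\int_{\R}\sqrt{\frac{K}{2\pi}}e^{-Ku^2/2}\dd u=1$ and the remaining term integrates to $0$ because $u\mapsto e^{-Ku^2/2}\frac{u}{\sqrt{u^2+4}}$ is odd and integrable).

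There is essentially no obstacle here; the only point requiring a word of justification is the positivity of the denominator, which guarantees that the cancellation is valid and that no integrability issue is introduced.
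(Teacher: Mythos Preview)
Your proof is correct and follows exactly the same route as the paper: write the expectation as an integral against the tilted-Gaussian density, cancel the factor $1+\delta\frac{u}{\sqrt{u^2+4}}$, and recognise the result as the total mass of $\tilde{\mathcal{N}}(K,\delta')$. Your explicit check that the denominator is strictly positive is a small extra bit of care that the paper omits.
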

\begin{proof}
We have:
\[
\begin{aligned}
\E\left(\frac{1+\delta^{\prime}\frac{U}{\sqrt{(U)^2+4}}}{1+\delta\frac{U}{\sqrt{(U)^2+4}}}\right)
=&\int\limits_{u\in\R}\sqrt{\frac{K}{2\pi}}\exp\left(-\frac{Ku^2}{2}\right)\left(1+\delta\frac{u}{\sqrt{u^2+4}}\right)\left(\frac{1+\delta^{\prime}\frac{u}{\sqrt{(u)^2+4}}}{1+\delta\frac{u}{\sqrt{(u)^2+4}}}\right)\dd u\\
=&\int\limits_{u\in\R}\sqrt{\frac{K}{2\pi}}\exp\left(-\frac{Ku^2}{2}\right)\left(1+\delta^{\prime}\frac{u}{\sqrt{(u)^2+4}}\right)\dd u\\
=&1.
\end{aligned}
\]
\end{proof}
\begin{lem}\label{lem:tiltedgaussian}
Let $0< K^-\leq K^+$. Set $\delta\in[-1,1]$. There exists two random variables $U^-$ and $U^+$ distributed according to $\tilde{\mathcal{N}}(K^-,\delta)$ and $\tilde{\mathcal{N}}(K^+,\delta)$ respectively such that:
\[
\forall \delta^{\prime}\in[-1,1],\ 
\E\left(\frac{1+\delta^{\prime}\frac{U^-}{\sqrt{(U^-)^2+4}}}{1+\delta\frac{U^-}{\sqrt{(U^-)^2+4}}}|U^+\right)
=\frac{1+\delta^{\prime}\frac{U^+}{\sqrt{(U^+)^2+4}}}{1+\delta\frac{U^+}{\sqrt{(U^+)^2+4}}},
\] 
and
\[
K^{-}(U^-)^2=K^{+}(U^+)^2 \text{ a.s.}
\]
\end{lem}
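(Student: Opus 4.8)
The plan is to realise both $U^-$ and $U^+$ as functions of one standard Gaussian $G$ together with an auxiliary pair of signs. Throughout write $\Phi(u):=u/\sqrt{u^2+4}$, an odd increasing function with $|\Phi|<1$, so that $\tilde{\mathcal N}(K,\delta)$ has density $\sqrt{K/2\pi}\,e^{-Ku^2/2}\bigl(1+\delta\Phi(u)\bigr)$. Since the antisymmetric part $\delta\Phi(u)$ integrates to $0$, the image of $\tilde{\mathcal N}(K,\delta)$ under $u\mapsto|u|$ is the law of $|N|$ for $N\sim\mathcal N(0,1/K)$; this lets me set $|U^-|:=|G|/\sqrt{K^-}$ and $|U^+|:=|G|/\sqrt{K^+}$, which already yields the required identity $K^-(U^-)^2=G^2=K^+(U^+)^2$ and, because $K^-\le K^+$, the inequality $0\le|U^+|\le|U^-|$. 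It then remains to attach signs $\varepsilon^\pm:=\mathrm{sgn}\,U^\pm$ via an explicit kernel depending only on $a:=|U^-|$ and $b:=|U^+|$, and to set $U^\pm:=\varepsilon^\pm|U^\pm|$.

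To design the kernel, put $p:=\tfrac12(1+\delta\Phi(a))$, $q:=1-p$, $r:=\tfrac12(1+\delta\Phi(b))$, $s:=1-r$, all in $(0,1)$, and let $\pi_{\varepsilon^-\varepsilon^+}$ denote the joint law of $(\varepsilon^-,\varepsilon^+)$ conditional on $|G|$. I want, for every $\delta'\in[-1,1]$, $\E\bigl((1+\delta'\Phi(U^-))/(1+\delta\Phi(U^-))\mid\varepsilon^+,|G|\bigr)=(1+\delta'\Phi(U^+))/(1+\delta\Phi(U^+))$; both sides are affine in $\delta'$, so this amounts to matching the constant term and the $\delta'$-coefficient, two scalar equations. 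Together with the constraint $\Prob(\varepsilon^+=1\mid|G|)=r$ these force
\[
\pi_{++}=\tfrac{p}{2}\Bigl(1+\tfrac{\Phi(b)}{\Phi(a)}\Bigr),\quad
\pi_{-+}=\tfrac{q}{2}\Bigl(1-\tfrac{\Phi(b)}{\Phi(a)}\Bigr),\quad
\pi_{+-}=\tfrac{p}{2}\Bigl(1-\tfrac{\Phi(b)}{\Phi(a)}\Bigr),\quad
\pi_{--}=\tfrac{q}{2}\Bigl(1+\tfrac{\Phi(b)}{\Phi(a)}\Bigr).
\]
Here $\Phi(a)>0$ almost surely because $U^-$ has a density, and $0\le\Phi(b)/\Phi(a)\le1$ since $\Phi$ is increasing and $b\le a$; with $p,q\in(0,1)$ this makes all four numbers nonnegative, and this is precisely the point where the hypothesis $K^-\le K^+$ enters. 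A one-line computation shows the $\varepsilon^-$-marginal is $(p,q)$ and the $\varepsilon^+$-marginal is $(r,s)$, so $U^-\sim\tilde{\mathcal N}(K^-,\delta)$ and $U^+\sim\tilde{\mathcal N}(K^+,\delta)$. Finally, as $U^+$ and the pair $(|G|,\varepsilon^+)$ generate the same $\sigma$-field almost surely, the conditional law of $U^-$ given $U^+$ is exactly the kernel just built, and its two defining equations give the asserted conditional-expectation identity.

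The genuinely non-obvious step is guessing the sign kernel. The naive monotone coupling --- when $\delta\ge0$, force $\varepsilon^-=-1\Rightarrow\varepsilon^+=-1$ --- does reproduce both marginals and the identity $K^-(U^-)^2=K^+(U^+)^2$, but it fails the conditional-expectation identity: on $\{\varepsilon^+=1\}$ it pins $\varepsilon^-=1$, so the left-hand side collapses to $(1+\delta'\Phi(a))/(1+\delta\Phi(a))$ instead of the required $(1+\delta'\Phi(b))/(1+\delta\Phi(b))$. One is therefore forced to keep $\varepsilon^-$ random given $\varepsilon^+$ and to solve the small linear system above; once the kernel is written down, every remaining check is immediate, and taking unconditional expectations recovers the identity of Lemma \ref{lem:tiltedgaussianesp}.
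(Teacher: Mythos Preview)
Your proof is correct and is essentially the same coupling as the paper's, presented from a slightly different vantage point. The paper starts from $U^+\sim\tilde{\mathcal N}(K^+,\delta)$ and sets $U^-=\pm KU^+$ with $K=\sqrt{K^+/K^-}$, choosing the sign with probabilities $p^\pm=\tfrac12(1\pm V^+/V^-)\,\dfrac{1\pm\delta V^-}{1+\delta V^+}$ where $V^+=\Phi(U^+)$ and $V^-=\Phi(KU^+)$; you instead introduce a common standard Gaussian $G$, set $|U^\pm|=|G|/\sqrt{K^\pm}$, and specify the joint law of the signs $(\varepsilon^-,\varepsilon^+)$ via the kernel $\pi_{\varepsilon^-\varepsilon^+}$. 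Translating notation ($a=K|U^+|$, $b=|U^+|$, $\rho=\Phi(b)/\Phi(a)=V^+/V^-$) one checks that your conditional probabilities $\pi_{\varepsilon^-\varepsilon^+}/\Prob(\varepsilon^+)$ coincide exactly with the paper's $p^\pm$, so the two constructions are the same coupling. Your symmetric presentation makes the marginal checks particularly transparent, while the paper's sequential construction makes the conditioning on $U^+$ immediate; neither buys anything the other does not.
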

\begin{proof}
Let $K:=\sqrt{\frac{K^+}{K^-}}$. Let $U^+$ be a random variable distributed according to $\tilde{\mathcal{N}}(K^+,\delta)$. First we define the random variables $V^+$ and $V^-$ by:
\[
\begin{aligned}
V^+:=&\frac{U^+}{\sqrt{(U^+)^2+4}}\\
V^-:=&\frac{KU^+}{\sqrt{K^2(U^+)^2+4}}.
\end{aligned}
\]
We notice that $0\leq |V^+|\leq |V^-|<1$. Let $p_1,p_2\in\R$ be defined by:
\[
p^+:=\frac{1}{2}\left(1+\frac{V^+}{V^-}\right)\frac{1+\delta V^-}{1+\delta V^+} \text{ and }
p^-:=\frac{1}{2}\left(1-\frac{V^+}{V^-}\right)\frac{1-\delta V^-}{1+\delta V^+}.
\]
Both $p^+$ and $p^-$ are non-negative. We also have:
\[
\begin{aligned}
p^+ + p^-=&
\frac{1}{2}\left(1+\frac{V^+}{V^-}\right)\frac{1+\delta V^-}{1+\delta V^+}
+\frac{1}{2}\left(1-\frac{V^+}{V^-}\right)\frac{1-\delta V^-}{1+\delta V^+}\\
=&\frac{1+\delta V^- + 1 - \delta V^- +\frac{V^+}{V^-}\left(1+\delta V^- - 1 + \delta V^-\right)}{2(1+\delta V^+)}\\
=&\frac{2 +\frac{V^+}{V^-}2\delta V^-}{2(1+\delta V^+)}=1.
\end{aligned}
\]
 Now, let $U^{-}$ the random variable be such that knowing $U^+$:
\[
U^-:=\left\{ \begin{matrix}
KU^+ \text{ with probability } p^+ \\
-KU^+ \text{ with probability } p^-
\end{matrix}\right. .
\]
Now we want to show that $U^{-}$ is distributed according to $\tilde{\mathcal{N}}(K^-,\delta)$. We have, for any test function $f$:
\[
\begin{aligned}
\E\left(f(U^-)\right)
=&\E\left(\E\left(f(U^-)|U^+\right)\right)\\
=&\E\left(\frac{1}{2}\left(1+\frac{V^+}{V^-}\right)\frac{1+\delta V^-}{1+\delta V^+}f\left(KU^+\right)
+\frac{1}{2}\left(1-\frac{V^+}{V^-}\right)\frac{1-\delta V^-}{1+\delta V^+}f\left(-KU^+\right)\right).
\end{aligned}
\]
First we get:
\[
\begin{aligned}
&\E\left(\frac{1}{2}\left(1+\frac{V^+}{V^-}\right)\frac{1+\delta V^-}{1+\delta V^+}f\left(KU^+\right)\right)\\
=&\int\limits_{u\in\R}\sqrt{\frac{K^+}{2\pi}}\exp\left(-\frac{K^+u^2}{2}\right)\left(1+\delta\frac{u}{\sqrt{u^2+4}}\right)\left(\frac{1}{2}\left(1+\frac{\sqrt{K^2u^2+4}}{K\sqrt{u^2+4}}\right)\frac{1+\delta K\frac{u}{\sqrt{K^2u^2+4}}}{1+\delta\frac{u}{\sqrt{u^2+4}}}f\left(Ku\right)\right)\dd u\\
=&\int\limits_{u\in\R}\sqrt{\frac{K^+}{2\pi}}\exp\left(-\frac{K^+u^2}{2}\right)\left(1+\delta K\frac{u}{\sqrt{K^2u^2+4}}\right)\left(\frac{1}{2}\left(1+\frac{\sqrt{K^2u^2+4}}{K\sqrt{u^2+4}}\right)f\left(Ku\right)\right)\dd u\\
=&\int\limits_{u\in\R}\sqrt{\frac{K^-}{2\pi}}\exp\left(-\frac{K^-u^2}{2}\right)\left(1+\delta \frac{u}{\sqrt{u^2+4}}\right)\left(\frac{1}{2}\left(1+\frac{\sqrt{u^2+4}}{\sqrt{u^2+4K}}\right)f\left(u\right)\right)\dd u.
\end{aligned}
\]
Similarly, we have:
\[
\begin{aligned}
&\E\left(\frac{1}{2}\left(1-\frac{V^+}{V^-}\right)\frac{1-\delta V^-}{1+\delta V^+}f\left(-KU^+\right)\right)\\
=&\int\limits_{u\in\R}\sqrt{\frac{K^+}{2\pi}}\exp\left(-\frac{K^+u^2}{2}\right)\left(1+\delta\frac{u}{\sqrt{u^2+4}}\right)\left(\frac{1}{2}\left(1-\frac{\sqrt{K^2u^2+4}}{K\sqrt{u^2+4}}\right)\frac{1-\delta K\frac{u}{\sqrt{K^2u^2+4}}}{1+\delta\frac{u}{\sqrt{u^2+4}}}f\left(-Ku\right)\right)\dd u\\
=&\int\limits_{u\in\R}\sqrt{\frac{K^+}{2\pi}}\exp\left(-\frac{K^+u^2}{2}\right)\left(1-\delta K\frac{u}{\sqrt{K^2u^2+4}}\right)\left(\frac{1}{2}\left(1-\frac{\sqrt{K^2u^2+4}}{K\sqrt{u^2+4}}\right)f\left(-Ku\right)\right)\dd u\\
=&\int\limits_{u\in\R}\sqrt{\frac{K^-}{2\pi}}\exp\left(-\frac{K^-u^2}{2}\right)\left(1+\delta \frac{u}{\sqrt{u^2+4}}\right)\left(\frac{1}{2}\left(1-\frac{\sqrt{u^2+4}}{\sqrt{u^2+4K}}\right)f\left(u\right)\right)\dd u.
\end{aligned}
\]
If we put both equalities together, we get for any test function $f$:
\[
\E\left(f(U^-)\right)
=\int\limits_{u\in\R}\sqrt{\frac{K^-}{2\pi}}\exp\left(-\frac{K^-u^2}{2}\right)\left(1+\delta \frac{u}{\sqrt{u^2+4}}\right)f(u)\dd u.
\]
This means that $U^-$ is indeed distributed according to $\tilde{\mathcal{N}}(K^-,\delta)$. Now we only need to show that $U^+$ and $U^-$ satisfy the equality we want. First we notice that for any $x\in (-1,1)$:
\[
\frac{1+\delta^{\prime} x}{1+\delta x}= 1 + (\delta^{\prime}-\delta)\frac{x}{1+\delta x}.
\]
This means that we only need to show that:
\[
\E\left(\frac{\frac{U^-}{\sqrt{(U^-)^2+4}}}{1+\delta \frac{U^-}{\sqrt{(U^-)^2+4}}}|U^+\right)
=\frac{\frac{U^+}{\sqrt{(U^+)^2+4}}}{1+\delta \frac{U^+}{\sqrt{(U^+)^2+4}}}.
\]
Which is the same as showing:
\[
\E\left(\frac{\frac{U^-}{\sqrt{(U^-)^2+4}}}{1+\delta \frac{U^-}{\sqrt{(U^-)^2+4}}}|U^+\right)
=\frac{V^+}{1+\delta V^+}.
\]
By definition of $U^-,V^-$ and $V^+$, we have:
\[
\begin{aligned}
&\E\left(\frac{\frac{U^-}{\sqrt{(U^-)^2+4}}}{1+\delta \frac{U^-}{\sqrt{(U^-)^2+4}}}|U^+\right)\\
=&\frac{1}{2}\left(1+\frac{V^+}{V^-}\right)\frac{1+\delta V^-}{1+ \delta V^+}\frac{\frac{KU^+}{\sqrt{(KU^+)^2+4}}}{1+\delta \frac{KU^+}{\sqrt{(KU^+)^2+4}}}
+\frac{1}{2}\left(1-\frac{V^+}{V^-}\right)\frac{1-\delta V^-}{1+ \delta V^+}\frac{\frac{-KU^+}{\sqrt{(-KU^+)^2+4}}}{1+\delta \frac{-KU^+}{\sqrt{(-KU^+)^2+4}}}\\
=&\frac{1}{2}\left(1+\frac{V^+}{V^-}\right)\frac{1+\delta V^-}{1+ \delta V^+}\frac{V^-}{1+\delta V^-}
+\frac{1}{2}\left(1-\frac{V^+}{V^-}\right)\frac{1-\delta V^-}{1+ \delta V^+}\frac{-V^-}{1-\delta V^-}\\
=&\frac{1}{2}\left(1+\frac{V^+}{V^-}\right)\frac{V^-}{1+ \delta V^+}
+\frac{1}{2}\left(1-\frac{V^+}{V^-}\right)\frac{-V^-}{1+ \delta V^+}\\
=&\frac{V^+}{1+\delta V^+}
\end{aligned}
\]
\end{proof}

\begin{lem}\label{lem:martingale}
Set $w>0$ and $W:=\left(\begin{matrix} 0 & w \\ w & 0\end{matrix}\right)$. Now set 2 parameters $\lambda,\theta\in[0,1]$. Let $(\beta_1,\beta_2)$ be distributed according to $\nu_2^{W,0}$. Let $H_{\beta}$ be the random matrix defined by:
\[
H_{\beta}:=\left(\begin{matrix} 2\beta_1 & -w \\ -w & 2\beta_2 \end{matrix}\right).
\] 
Let $G_{\beta}$ be the inverse of $H_{\beta}$. We define the random variables $\gamma$ and $Z$ by:
\[
\begin{aligned}
\gamma:=& \frac{4\beta_1\beta_2 -w^2}{2w\lambda(1-\lambda)+2\beta_2 \lambda^2 + 2 \beta_1 (1-\lambda)^2},\\
Z:=&\frac{2\beta_1 -\lambda \gamma}{w+\lambda(1-\lambda)\gamma}.
\end{aligned}
\]
We have:
\[
\left(\begin{matrix} \lambda & (1-\lambda)\end{matrix}\right) G_{\beta} \left(\begin{matrix} \theta \\ (1-\theta) \end{matrix}\right)
=\frac{\theta\frac{1}{\sqrt{Z}}+(1-\theta) \sqrt{Z}}{\gamma\left((1-\lambda)\sqrt{Z} + \lambda\frac{1}{\sqrt{Z}}\right)}.
\]
\end{lem}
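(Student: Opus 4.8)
\medskip

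The plan is to compute both sides explicitly using the change of variables from Lemma \ref{lem:coupling} and check they agree. First I would write $G_\beta = H_\beta^{-1}$ explicitly: since $H_\beta = \left(\begin{matrix} 2\beta_1 & -w \\ -w & 2\beta_2\end{matrix}\right)$, we have $G_\beta = \frac{1}{4\beta_1\beta_2 - w^2}\left(\begin{matrix} 2\beta_2 & w \\ w & 2\beta_1\end{matrix}\right)$, so the left-hand side becomes
\[
\left(\begin{matrix}\lambda & 1-\lambda\end{matrix}\right) G_\beta \left(\begin{matrix}\theta \\ 1-\theta\end{matrix}\right) = \frac{2\beta_2\lambda\theta + w\lambda(1-\theta) + w(1-\lambda)\theta + 2\beta_1(1-\lambda)(1-\theta)}{4\beta_1\beta_2 - w^2}.
\]

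\medskip

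Next I would substitute the expressions for $2\beta_1$, $2\beta_2$ and the determinant supplied by Lemma \ref{lem:coupling}, namely $2\beta_1 = \lambda^2\gamma + (w+\lambda(1-\lambda)\gamma)Z$, $2\beta_2 = (1-\lambda)^2\gamma + (w+\lambda(1-\lambda)\gamma)\frac1Z$, and $4\beta_1\beta_2 - w^2 = (w+\lambda(1-\lambda)\gamma)\gamma\left((1-\lambda)\sqrt{Z}+\frac{\lambda}{\sqrt{Z}}\right)^2$. Substituting into the numerator and expanding, I expect the terms involving $\gamma$ alone to combine (using $\lambda^2(1-\lambda)\theta$-type groupings) into something proportional to $\lambda(1-\lambda)\gamma\left(\theta\frac1{\sqrt Z}+(1-\theta)\sqrt Z\right)$ times $(1-\lambda)\sqrt Z + \lambda\frac1{\sqrt Z}$, while the $w$-terms should combine into $w\left(\theta\frac1{\sqrt Z}+(1-\theta)\sqrt Z\right)$ times $(1-\lambda)\sqrt Z + \lambda\frac1{\sqrt Z}$; here one writes $\lambda\theta\cdot\frac{Z}{\sqrt Z\cdot\frac{1}{\sqrt Z}}$-style rearrangements and uses $\lambda + (1-\lambda) = 1$. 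Factoring out the common $(w+\lambda(1-\lambda)\gamma)$, the numerator should become $(w+\lambda(1-\lambda)\gamma)\left((1-\lambda)\sqrt Z + \lambda\frac1{\sqrt Z}\right)\left(\theta\frac1{\sqrt Z}+(1-\theta)\sqrt Z\right)$. Dividing by the determinant cancels one factor of $(w+\lambda(1-\lambda)\gamma)$ and one factor of $\left((1-\lambda)\sqrt Z + \lambda\frac1{\sqrt Z}\right)$, leaving exactly $\frac{\theta\frac1{\sqrt Z}+(1-\theta)\sqrt Z}{\gamma\left((1-\lambda)\sqrt Z+\lambda\frac1{\sqrt Z}\right)}$, as claimed.

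\medskip

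The main obstacle is purely the bookkeeping in expanding the numerator: one has a sum of four terms, each of which splits into a $\gamma$-part and a $(w+\lambda(1-\lambda)\gamma)Z^{\pm1}$-part, so eight terms to track, and one must verify that all of them repackage into the single product above. A cleaner route — which I would actually prefer — avoids the brute expansion: note that $\left(\begin{matrix}\lambda & 1-\lambda\end{matrix}\right)H_\beta^{-1}\left(\begin{matrix}\theta\\1-\theta\end{matrix}\right)$ is bilinear and can be evaluated by diagonalizing $H_\beta$ in the basis adapted to the rank-one decomposition of Lemma \ref{lem:coupling}. Indeed Lemma \ref{lem:coupling} says $H_\beta - \gamma Y\,{}^tY$ has rank one with nonzero eigenvector $\left(\begin{matrix}\sqrt Z\\ 1/\sqrt Z\end{matrix}\right)$ (where $Y = \left(\begin{matrix}\lambda\\1-\lambda\end{matrix}\right)$), and the corresponding eigenvalue is $w+\lambda(1-\lambda)\gamma$ (read off from the explicit rank-one matrix there). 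So in the (non-orthogonal) basis $\{Y,\ V\}$ with $V=\left(\begin{matrix}\sqrt Z\\1/\sqrt Z\end{matrix}\right)$ one has a simple description of $H_\beta$, hence of $G_\beta$, and both $\left(\begin{matrix}\lambda & 1-\lambda\end{matrix}\right)={}^tY$ and $\left(\begin{matrix}\theta & 1-\theta\end{matrix}\right)$ decompose nicely against this basis; the determinant formula of Lemma \ref{lem:coupling} then supplies the normalizing constant. Either way the identity is a finite computation with no analytic input beyond Lemma \ref{lem:coupling}.
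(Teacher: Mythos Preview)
Your primary approach is correct and is exactly what the paper does: write $G_\beta$ explicitly, substitute the expressions for $2\beta_1$, $2\beta_2$ and $4\beta_1\beta_2-w^2$ from Lemma~\ref{lem:coupling} (together with $w=\tilde w-\lambda(1-\lambda)\gamma$, $\tilde w:=w+\lambda(1-\lambda)\gamma$), and observe that the numerator factors as $\tilde w\bigl(\lambda\tfrac{1}{\sqrt Z}+(1-\lambda)\sqrt Z\bigr)\bigl(\theta\tfrac{1}{\sqrt Z}+(1-\theta)\sqrt Z\bigr)$, after which the division by the determinant gives the claim. (Your alternative ``cleaner route'' is plausible but has a small slip: for the rank-one matrix $H_\beta-\gamma Y\,{}^tY$ the nonzero eigenvalue is $(w+\lambda(1-\lambda)\gamma)(Z+1/Z)$, not $w+\lambda(1-\lambda)\gamma$, and the eigenvector is $(\sqrt Z,\,-1/\sqrt Z)$; if you pursue that route, these need adjusting.)
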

\begin{proof}
First, by lemma \ref{lem:coupling} we have:
\[
\begin{aligned}
2\beta_1 =& \left(w+\lambda(1-\lambda) \gamma \right) Z + \lambda^2 \gamma,\\
2\beta_2 =& \left(w+\lambda(1-\lambda) \gamma \right) \frac{1}{Z} + (1-\lambda)^2 \gamma,\\
 w =& \left(w+\lambda(1-\lambda) \gamma \right) - \lambda(1-\lambda) \gamma.
\end{aligned}
\]
To simplify notations, let $\tilde{w}$ be the random variable defined by $\tilde{w}:=w+\lambda(1-\lambda) \gamma $. A quantity that will be important in the following is the determinant of $H_{\beta}$: $4\beta_1\beta_2- w^2$. By lemma \ref{lem:coupling}, we have:
\[
4\beta_1\beta_2- w^2
=\tilde{w}\gamma\left((1-\lambda)\sqrt{Z} + \lambda\frac{1}{\sqrt{Z}}\right)^2.
\]
We know that :
\[
G_{\beta}(1,1)=\frac{2\beta_2}{4\beta_1\beta- w^2},\ 
G_{\beta}(2,2)=\frac{2\beta_1}{4\beta_1\beta- w^2} \text{ and }
G_{\beta}(1,2)=G_{\beta}(2,1)=\frac{w}{4\beta_1\beta- w^2}.
\]
Therefore:
\[
\left(\begin{matrix} \lambda & 1-\lambda\end{matrix}\right) G_{\beta} \left(\begin{matrix} \theta \\ 1-\theta \end{matrix}\right)\\
= \frac{\lambda\theta2\beta_2+(\lambda(1-\theta)+(1-\lambda)\theta)w+(1-\lambda)(1-\theta)2\beta_1}{4\beta_1\beta_2-w^2}.
\]
Now we also have:
\[
\begin{aligned}
&\lambda\theta 2\beta_2+(\lambda(1-\theta)+(1-\lambda)\theta)w+(1-\lambda)(1-\theta)2\beta_1\\
=&\lambda\theta \left(\tilde{w}\frac{1}{Z}+(1-\lambda)^2\gamma\right)+(\lambda(1-\theta)+(1-\lambda)\theta)(\tilde{w}-\lambda(1-\lambda)\gamma)+(1-\lambda)(1-\theta)\left(\tilde{w}Z+\lambda^2\gamma\right)\\
=&\lambda\theta \tilde{w}\frac{1}{Z}+(\lambda(1-\theta)+(1-\lambda)\theta)\tilde{W}+(1-\lambda)(1-\theta)\tilde{w}Z\\
=&\tilde{w}\left(\lambda\frac{1}{\sqrt{Z}}+(1-\lambda) \sqrt{Z}\right)\left(\theta\frac{1}{\sqrt{Z}}+(1-\theta) \sqrt{Z}\right).
\end{aligned}
\]
We therefore get:
\[
\begin{aligned}
\left(\begin{matrix} \lambda & 1-\lambda \end{matrix}\right) G_{\beta} \left(\begin{matrix} \theta \\ 1-\theta \end{matrix}\right)
=& \frac{\tilde{w}\left(\lambda\frac{1}{\sqrt{Z}}+(1-\lambda) \sqrt{z}\right)\left(\theta\frac{1}{\sqrt{Z}}+(1-\theta) \sqrt{Z}\right)}{\tilde{w}\gamma\left((1-\lambda)\sqrt{Z} + \lambda\frac{1}{\sqrt{Z}}\right)^2}\\
=&\frac{\theta\frac{1}{\sqrt{Z}}+(1-\theta) \sqrt{Z}}{\gamma\left((1-\lambda)\sqrt{Z} + \lambda\frac{1}{\sqrt{Z}}\right)}.
\end{aligned}
\]
\end{proof}

\section{Main theorem}

Some of the results are based on some manipulations on graph, mostly we will quotient graphs. We remind the reader of the definition of the quotient of a graph by one of its subset. We also add the notion of weight for these quotients.
\begin{defin}
Let $\mathcal{G}=(V,E)$ be a locally finite, non-directed graph. Let $(W_e)_{e\in E}$ be a family of weights on the edges of $\mathcal{G}=(V,E)$. Let $A$ be a subset of $V$. The quotient $(\tilde{V}^A,\tilde{E}^A),\tilde{W}^A$ of the weighted graph $\mathcal{G},W$ by the subset of vertices $A$ is defined by:
\[
\begin{aligned}
&\tilde{V}^A:= V\backslash A \cup \{x_A\}  \\
&\tilde{E}^A:= \{ \{x,y\}\in E, x,y\in V\backslash A  \} \cup \{ \{x_A,y\} \in \left(\tilde{V}_A\right)^2, \exists x \in A, \{x,y\}\in E  \}\\
&\forall \{x,y\} \in \tilde{E}^A, x,y\not\in A, W^A_{\{x,y\}}:=W_{\{x,y\}},\\
&\forall x\in \tilde{V}^A \backslash \{x_A\} \text{ such that } \{x_A,x\}\in \tilde{E}^A, W^A_{\{x_A,a\}}:=\sum\limits_{y\in A} 1_{\{x,y\}\in E} W_{\{x,y\}}. 
\end{aligned}
\]
\end{defin}

We can now prove our main theorem.

\begin{proof}[proof of theorem \ref{maintheo}]
According to proposition \ref{lem:VRJPNu0}, the marginal law of $(\beta_i)_{1\leq i\leq n}$ is the same under $\nu_{n+2}^{W^-,0},\nu_{n+2}^{W^+,0}$ and $\nu_{n+1}^{W^{\infty},0}$ and is equal to $\nu_{n}^{W,\eta}$ for some $\eta\in\R^n$. Let $H$ be distributed according to $\tilde{\nu}_{n}^{W,\eta}$. Let $K\in[0,+\infty)$ be the random variable defined by 
\[
K:=\T{W}^2 H^{-1} W^1,
\] 
and $\tilde{K}$ the random matrix defined by:
\[
\tilde{K}=\left(\begin{matrix} 0 & K \\ K & 0\end{matrix}\right).
\]
Set a vector $X^1\in [0,\infty)^{n+2}$. Let $\alpha_1(X^1)$ and $\alpha_2(X^1)$ be the numbers defined in lemma \ref{lem:downto2} and $\alpha(X^1):=\alpha_1(X^1)+\alpha_2(X^1)$. Let $\lambda\in[0,1]$ be the random variable defined by
\[
\lambda:=\left\{\begin{matrix}\frac{\alpha_1(X^1)}{\alpha_1(X^1)+\alpha_2(X^1)} &\text{ if }\alpha_1(X^1)+\alpha_2(X^1)\not=0 \\ 0 &\text{ otherwise} \end{matrix}\right. ,
\]
and $\delta\in[-1,1]$ the random variable defined by $\delta:=2\lambda-1$.\\
If $n+1$ and $n+2$ are $H^-$-connected then $K+w^->0$. Let $\gamma$ be a random variable distributed according to a $\Gamma\left(\frac{1}{2}\right)$ distribution. Now let $U^-$ and $U^+$ be two random variables distributed according to $\tilde{\mathcal{N}}\left(K+w^-,\delta\right)$ and $\tilde{\mathcal{N}}\left(K+w^+,\delta\right)$ respectively and such that 
\[
\forall \delta^{\prime}\in[-1,1],\ 
\E\left(\frac{1+\delta^{\prime}\frac{U^-}{\sqrt{(U^-)^2+4}}}{1+\delta\frac{U^-}{\sqrt{(U^-)^2+4}}}|U^+\right)
=\frac{1+\delta^{\prime}\frac{U^+}{\sqrt{(U^+)^2+4}}}{1+\delta\frac{U^+}{\sqrt{(U^+)^2+4}}}.
\] 
Such two random variables exist by lemma \ref{lem:tiltedgaussian}. We define the positive random variables $Z^-$ and $Z^+$ by:
\[
U^-=\sqrt{Z^-}-\frac{1}{\sqrt{Z^-}} \text{ and }
U^+=\sqrt{Z^+}-\frac{1}{\sqrt{Z^+}}.
\]
Now, we define the random variables $\tilde{\beta}^{-}_{n+1},\tilde{\beta}^{-}_{n+2},\tilde{\beta}^{+}_{n+1}$ and $\tilde{\beta}^{+}_{n+2}$ by:
\[
\begin{aligned}
2\tilde{\beta}^{-}_{n+1}=&\left(K+w^-+\lambda(1-\lambda)\gamma\right)Z^- +\lambda^2\gamma \\
2\tilde{\beta}^{-}_{n+2}=&\left(K+w^-+\lambda(1-\lambda)\gamma\right)Z^- +(1-\lambda)^2\gamma \\
2\tilde{\beta}^{+}_{n+1}=&\left(K+w^++\lambda(1-\lambda)\gamma\right)Z^+ +\lambda^2\gamma \\
2\tilde{\beta}^{+}_{n+2}=&\left(K+w^++\lambda(1-\lambda)\gamma\right)Z^+ +(1-\lambda)^2\gamma.
\end{aligned}
\]
Let $\tilde{K}^-$ and $\tilde{K}^+$ be the matrices defined by:
\[
\tilde{K}^-:=\left(\begin{matrix}  0 & w^-+K \\ w^-+K & 0 \end{matrix}\right) \text{ and } \tilde{K}^+:=\left(\begin{matrix}  0 & w^++K \\ w^++K & 0 \end{matrix}\right).
\]
By lemma \ref{lem:coupling}, knowing $K$ and $\delta$, $(\tilde{\beta}^{-}_{n+1},\tilde{\beta}^{-}_{n+2})$ and $(\tilde{\beta}^{-}_{n+1},\tilde{\beta}^{-}_{n+2})$are distributed according to $\nu_2^{\tilde{K}^-,0}$ and $\nu_2^{\tilde{K}^+,0}$ respectively. Now we can define the matrices $H^-,H^+$ and $H^{\infty}$ by bloc:
\[
H^-=\left(\begin{matrix} H & -W^1 & -W^2 \\ -\T{W^1}& 2\tilde{\beta}^{-}_{n+1}+\T{W}^{1} H^{-1} W^{1}& -w^-\\ -\T{W^2}& -w^-& 2\tilde{\beta}^{-}_{n+2}+\T{W}^{2} H^{-1} W^{2}\end{matrix}\right),
\]
\[
H^+=\left(\begin{matrix} H & -W^1 & -W^2 \\ -\T{W^1}& 2\tilde{\beta}^{-}_{n+1}+\T{W}^{1} H^{-1} W^{1}& -w^+\\ -\T{W^2}& -w^+& 2\tilde{\beta}^{+}_{n+2}+\T{W}^{2} H^{-1} W^{2}\end{matrix}\right),
\]
\[
H^{\infty}=\left(\begin{matrix} H & -W^1 -W^2 \\ -\T{W^1}-\T{W^2}& \gamma + \left(\T{W}^{1}+ \T{W}^{2}\right) H^{-1} \left( W^{1} + W^{2}\right)\end{matrix}\right).
\]
By proposition \ref{lem:VRJPNu0} and lemma \ref{lem:Schur}, $H^-,H^+$ and $H^{\infty}$ are distributed according to $\tilde{\nu}_{n+2}^{W^-,0},\tilde{\nu}_{n+2}^{W^+,0}$ and $\tilde{\nu}_{n+1}^{W^{\infty},0}$ respectively. Let $G^-,G^+$ and $G^{\infty}$ be the inverse of $H^-,H^+$ and $H^{\infty}$ respectively. Let $G^{22,-},G^{22,+}$ and $G^{22,\infty}$ be defined by:
\[
\begin{aligned}
G^{22,-}:=&\left(\begin{matrix}G^-(n+1,n+1) & G^-(n+1,n+2) \\ G^-(n+2,n+1) & G^-(n+2,n+2)\end{matrix}\right),\\ 
G^{22,+}:=&\left(\begin{matrix}G^+(n+1,n+1) & G^+(n+1,n+2) \\ G^+(n+2,n+1) & G^+(n+2,n+2)\end{matrix}\right) \text{ and }\\
G^{22,\infty}:=& \left(G^{\infty}(n+1,n+1)\right).
\end{aligned}
\]
For any vector $X^2\in[0,\infty)^{n+2}$, by lemma \ref{lem:downto2} there exists three non-negative random variables $C(X^1,X^2),\alpha_1(X^2)$ and $\alpha_2(X^2)$ that only depend on $H,W^1$ and $W^2$ such that:
\[
\T{X^1} G^- X^2 =  C(X^1,X^2) + \left(\begin{matrix} \alpha_1(X^1) & \alpha_2(X^1) \end{matrix}\right) 
G^{22,-}
\left(\begin{matrix} \alpha_1(X^2) \\ \alpha_2(X^2) \end{matrix}\right),
\]
\[
\T{X^1} G^+ X^2 =  C(X^1,X^2) + \left(\begin{matrix} \alpha_1(X^1) & \alpha_2(X^1) \end{matrix}\right) 
G^{22,+}
\left(\begin{matrix} \alpha_1(X^2) \\ \alpha_2(X^2) \end{matrix}\right),
\]
and
\[
\T{\overline{X}^1} G^{\infty} \overline{X}^2 = 
C(X^1,X^2) + (\alpha_1(X^1) +\alpha_2(X^1)) G^{22,\infty} (\alpha_1(X^2) +\alpha_2(X^2)).
\]
Let $\alpha(X^2):=\alpha_1(X^2)+\alpha_2(X^2)$ and let $\theta\in[-1,1]$ be defined by:
\[
\theta:=\left\{\begin{matrix}\frac{\alpha_1(X^2)}{\alpha_1(X^2)+\alpha_2(X^2)} &\text{ if }\alpha_1(X^2)+\alpha_2(X^2)\not=0 \\ 0 &\text{ otherwise} \end{matrix}\right. .
\]
We have:
\[
\T{X^1} G^- X^2 =  C(X^1,X^2) + \alpha(X^1)\alpha(X^2)\left(\begin{matrix} \lambda & 1-\lambda \end{matrix}\right) 
G^{22,-}
\left(\begin{matrix} \theta \\ 1-\theta \end{matrix}\right),
\]
\[
\T{X^1} G^+ X^2 =  C(X^1,X^2) + \alpha(X^1)\alpha(X^2)\left(\begin{matrix} \lambda & 1-\lambda \end{matrix}\right) 
G^{22,+}
\left(\begin{matrix} \theta \\ 1-\theta \end{matrix}\right),
\]
and
\[
\T{\overline{X}^1} G^{\infty} \overline{X}^2 = 
C(X^1,X^2) + \alpha(X^1)\alpha(X^2)G^{22,\infty}.
\]
By lemma \ref{lem:martingale} and by definition of $U^-$ and $U^+$, we have:
\[
\E\left(\T{X}^1 G^- X^2|H^{+}\right)=\T{X}^1 G^+ X^2,
\]
and 
\[
\T{X}^1 G^- X^1 =C(X^1,X^1) \left(\alpha_1(X^1)+\alpha_2(X^1)\right)^2\frac{1}{\gamma} = \T{X}^1 G^+ X^1 = \T{\overline{X^1}} G^{\infty} \overline{X^1}.
\]
By lemmas \ref{lem:martingale} and \ref{lem:tiltedgaussianesp}, we have:
\[
\E\left(\T{X}^1 G^+ X^2|H^{\infty}\right)= \T{\overline{X^1}} G^{\infty} \overline{X^2}.
\] 
\end{proof}

\begin{proof}[proof of theorem \ref{maincor}]
Set an integer $i\in[\![1,n]\!]$. We will only show this result when $W^-$ and $W^+$ differ by only two symmetric coefficients (i.e one edge): $(k,l)$ and $(l,k)$. We can assume that $W^-(n-1,n)<W^+(n-1,n)$ because of the symmetries of the family of laws $\tilde{\nu}_n^{W,0}$. For any $j_1,j_2\in[\![1,n]\!]$, $j_1$ and $j_2$ are $W^-$-connected. This means that by the main theorem, there exists two matrices $H^-$ and $H^+$ distributed according to $\tilde{\nu}_n^{W^-,0}$ and $\tilde{\nu}_n^{W^+,0}$ respectively, with inverse $G^-$ and $G^+$ respectively and such that:
\begin{itemize}
\item $G^-(i,i)=G^+(i,i)$ almost surely,
\item $\forall X\in[0,\infty)^n,\ \E\left(\sum\limits_{j=1}^n X_j G^-(i,j)|H^+\right) = \sum\limits_{j=1}^n X_j G^+(i,j)$.
\end{itemize}
This means that for any convex function $f$ and any vector $X\in[0,\infty)^n$:
\[
\E\left(f\left(\frac{\sum\limits_{j=1}^n X_j G^-(i,j)}{G^-(i,i)}\right)\right)\geq \E\left(f\left(\frac{\sum\limits_{j=1}^n X_j G^+(i,j)}{G^+(i,i)}\right)\right).
\]
\end{proof}

\section{Proofs of theorems \ref{theo:TransCroiss},\ref{theo2} and \ref{theo3}}

\subsection{Proof of theorem \ref{theo:TransCroiss}}

\begin{proof}
Let $d_{\mathcal{G}}(\cdot ,\cdot)$ be the graph distance on $\mathcal{G}$. Let $\mathcal{G}_n$ be the graph obtained by fusing together all the vertices at a distance $n$ or more from $0$. This means that $\mathcal{G}_n=(V_n,E_n)$, with:
\[
\begin{aligned}
V_n=&\{x\in V, d_{\mathcal{G}}(0,x)<n\}\cup\{\delta_n\} \text{ and,}\\ 
E_n=&\left\{ \{x,y\}\in E, (x,y)\in\! V_n^2 \right\} \cup \left\{ \{x,\delta_n\}, d_{\mathcal{G}}(0,x)=n-1, \exists y \in\! V\backslash V_n,  d_{\mathcal{G}}(x,y)=1  \right\}.
\end{aligned}
\]
Let $|V_n|$ be the number of vertices in $V_n$. Let $W_n^-\in M_{|V_n|}(\R)$ and $W_n^+ \in M_{|V_n|}(\R)$ be the symmetric matrices defined by:
\begin{itemize}
\item for any $x,y\in V_n$ such that $\{x,y\}\not\in E_n$, $W_n^-(x,y)=W_n^+(x,y)=0$,
\item for any $x,y\in V_n \backslash \{ \delta \}$, $W_n^-(x,x)=W^-_{\{x,x\}}$ and $W_n^+(x,x)=W_{\{x,x\}}^+$,
\item for any $x \in V_n \backslash \{ \delta \}$, $W_n^-(x,\delta_n)=W_n^-(\delta_n,x)=\sum\limits_{y\in V,\{x,y\}\in E}W^-_{\{x,y\}}1_{y\not\in V_n}$
\item for any $x \in V_n \backslash \{ \delta \}$, $W_n^+(x,\delta_n)=W_n^+(\delta_n,x)=\sum\limits_{y\in V,\{x,y\}\in E}W^+_{\{x,y\}}1_{y\not\in V_n}$ 
\end{itemize}
This means that for any $x,y\in V_n$, $W^-_n(x,y)\leq W^+_n(x,y)$. Let $H_n^-$ and $H_n^+$ be two random matrices distributed according to $\tilde{\nu}_{|V_n|}^{W_n^-,0}$ and $\tilde{\nu}_{|V_n|}^{W_n^+,0}$ respectively. Let $G_n^-$ and $G_n^+$ be the inverse of $H_n^-$ and $H_n^+$ respectively. By Theorem 1 of $\cite{VRJPNu}$, there exists two non-negative random variables $\psi^-(0)$ and $\psi^+(0)$ such that:
\[
\begin{aligned}
&\frac{G_n^-(0,\delta_n)}{G_n^-(\delta_n,\delta_n)}\xrightarrow[n\rightarrow \infty]{} \psi^-(0) \text{ in law, and}\\
&\frac{G_n^+(0,\delta_n)}{G_n^+(\delta_n,\delta_n)}\xrightarrow[n\rightarrow \infty]{} \psi^+(0) \text{ in law}.
\end{aligned}
\]
Furthermore, still by theorem 1 of $\cite{VRJPNu}$, we have:
\[
\begin{aligned}
&\Prob\left(\text{The VRJP with initial weights } w^- \text{ is recurrent}\right)=\Prob\left(\psi^-(0)=0\right),\\
&\Prob\left(\text{The VRJP with initial weights } w^+ \text{ is recurrent}\right)=\Prob\left(\psi^+(0)=0\right).
\end{aligned}
\]
Let $f:[0,\infty)\mapsto \R$ be a continuous, bounded, convex function. By theorem \ref{maincor}, we have, for any $n\geq 1$: 
\[
\E\left(f\left(\frac{G_n^-(0,\delta_n)}{G_n^-(\delta_n,\delta_n)}\right)\right)\geq \E\left(f\left(\frac{G_n^+(0,\delta_n)}{G_n^+(\delta_n,\delta_n)}\right)\right).
\]
This means that $\E\left(f(\psi^-(0))\right)\geq \E\left(f(\psi^+(0))\right)$. For any $n\geq 1$, let $f_n:[0,\infty)\mapsto \R$ be the function defined by:
\[
f_n(x)=\left\{ \begin{matrix}
1-nx \text{ if } 0\leq x\leq \frac{1}{n} \\
0 \text{ if } x>\frac{1}{n}.
\end{matrix} \right.
\]
For any $n\geq 1$, the function $f_n$ is continuous, bounded and convex, so $\E\left(f_n(\psi^-(0))\right)\geq \E\left(f_n(\psi^+(0))\right)$. We notice that
\[
\begin{aligned}
&\E\left(f_n(\psi^-(0))\right)\xrightarrow[n\rightarrow \infty]{} \Prob\left(\psi^-(0)=0\right), \text{ and}\\
&\E\left(f_n(\psi^+(0))\right)\xrightarrow[n\rightarrow \infty]{} \Prob\left(\psi^+(0)=0\right).
\end{aligned}
\] 
This means that $\Prob\left(\psi^-(0)=0\right)\geq \Prob\left(\psi^+(0)=0\right)$ and therefore the probability that the VRJP with initial weights $w^-$ is recurrent is greater than the probability that the VRJP with initial weights $w^+ $ is recurrent.
\end{proof}

\subsection{Proof of theorem \ref{theo2}}

\begin{proof}
Set a dimension $d\geq 3$. By proposition 3 of \cite{VRJPNu}, for any $w\in (0,\infty)$, the VRJP on $\Z^d$ is either almost surely recurrent or almost surely transient. Furthermore, by theorem \ref{maincor}, the probability that the VRJP is recurrent is non-increasing in the initial weight. Therefore, there exists $w_d\in [0,\infty]$ such that the VRJP on $Z^d$ with initial weight $w\in (0,\infty)$ is recurrent if $w<w_d$ and transient if $w>w_d$. Since the VRJP is recurrent in dimension 3 for small enough weights (corollary 3 of \cite{ERRWVRJP}), $w_d\not=0$ and since it is transient for large enough weights (lemma 9 of \cite{VRJPNu}), $w_d\not = \infty$.
\end{proof}

\subsection{Proof of theorem \ref{theo3}}

\begin{proof}
Set a dimension $d\geq 3$. Let $E^d$ be the set of vertices in $\Z^d$. Set $0<a^-<a^+$. Let $(W^-_e)_{e\in E}$ be iid random Gamma variables with parameter $a^-$ and let $(W^{\prime}_e)_{e\in E}$ be iid random Gamma variables with parameter $a^+-a^-$. By theorem 1 of \cite{ERRWVRJP}, the ERRW on $\Z^d$ with initial weight $a^-\in (0,\infty)$ is a mixture of VRJP on $\Z^d$ where the initial weights are $(W^-_e)_{e\in E}$ and the ERRW on $\Z^d$ with initial weight $a^+\in (0,\infty)$ is a mixture of VRJP on $\Z^d$ where the initial weights are $(W^-_e + W^{\prime}_e)_{e\in E}$. Now, by theorem \ref{maincor}, the VRJP with initial weights $(W^-_e)_{e\in E}$ has a higher probability of being recurrent than the VRJP with initial weights $(W^-_e + W^{\prime}_e)_{e\in E}$. Therefore the probability that the ERRW with constant weight equal to $a$ is recurrent is non-increasing in $a$. By proposition 5 of $\cite{VRJPNu}$, the ERRW with initial weight $a$ is either almost surely transient or almost surely recurrent. Therefore, there exists $a_d\in [0,\infty]$ such that the ERRW on $\Z^d$ with initial weight $a\in (0,\infty)$ is recurrent if $a<a_d$ and transient if $a>a_d$. Since the ERRW is recurrent in dimension 3 for small enough weights, $a_d\not=0$ and since it is transient for large enough weights, $a_d\not = \infty$.
\end{proof}

\section{Proof of theorem \ref{theo:CondEff}}

\subsection{Preliminaries}

\begin{defin}
Let $\mathcal{G}=(V,E)$ be a finite graph and $(W_e)_{e\in E}$ be positive weights. Let $H_{\beta}$ be the random matrix distributed according to $\tilde{\nu}_n^{W,0}$ and $G_{\beta}$ its inverse. Let $x,y\in V$ be two distinct vertices of $\mathcal{G}$. The effective weight between $x$ and $y$, $w^{\text{eff}}_{x,y}$, is the random variable defined by:
\[
w^{\text{eff}}_{x,y}:= \frac{G_{\beta}(x,y)}{G_{\beta}(x,x)G_{\beta}(y,y)-G_{\beta}(x,y)^2}.
\]
\end{defin}

\begin{rmq}\label{rmq}
Let $\mathcal{G}=(V,E)$ be a finite graph and $(W_e)_{e\in E}$ be positive weights. Let $(\beta_i)_{i\in V}$ be random variables distributed according to $\nu_n^{W,0}$, $H_{\beta}$ the corresponding matrix (distributed according to $\tilde{\nu}_n^{W,0}$) and $G_{\beta}$ its inverse. Let $x,y\in V$ be two distinct vertices of $\mathcal{G}$ and $w^{\text{eff}}$ the effective weight between $x$ and $y$. Let $V_1:=\{x,y\}$ and $V_2:=V\backslash \{x,y\}$ be two subsets of $V$. The corresponding decomposition of $H_{\beta}$ is given by:
\[
H_{\beta}:=\left(\begin{matrix}H_{\beta}^{V_1} & -W^{V_1,V_2} \\ -\T{W}^{V_1,V_2} & H_{\beta}^{V_2}\end{matrix}\right).
\]
By lemma \ref{lem:Schur}, 
\begin{equation}
W^{\text{eff}}= W_{x,y}+\left(\T{W}^{V_1,V_2}\left(H_{\beta}^{V_2}\right)^{-1} W^{V_1,V_2}\right)(x,y).
\end{equation}
Furthermore, by lemmas \ref{lem:VRJPNu0} and \ref{lem:Schur}, the law of $\frac{G_{\beta}(x,y)}{G_{\beta}(y,y)}$ knowing the $\beta$-field on $V_2$ is the same as the law of $\frac{G_{\beta}(z_1,z_2)}{G_{\beta}(z_2,z_2)}$ on a two-vertices graph $\{z_1,z_2\}$ where $W_{z_1,z_2}=w^{\text{eff}}$.\\
\end{rmq}

\begin{lem}\label{lem:conduc}
Let $\mathcal{G}=(V,E)$ be a finite graph and $x_0,\delta \in V$ two distinct vertices. Let $(c_e)_{e\in E}$ be a family of random (not necessarily independent) positive conductances. Let $c_{\text{eff}}$ be the (random) effective conductance between $x_0$ and $\delta$ for the electrical network with initial conductances $(c_e)_{e\in E}$. Let $\overline{c}_{\text{eff}}$ be the equivalent conductance between $x_0$ and $\delta$ if we set conductances $(\overline{c}_e)_{e\in E}$ defined by $\overline{c}_e:=\E\left(c_e\right)$ on $\mathcal{G}$. We have the following inequality:
\[
\E(c_{\text{eff}}) \leq \overline{c}_{\text{eff}}.
\]
\end{lem}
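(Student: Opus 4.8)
The plan is to show that, on a finite graph, the effective conductance between $x_0$ and $\delta$ is a \emph{concave} function of the vector of edge conductances, and then to conclude by Jensen's inequality. The concavity is cleanest to obtain through the Dirichlet variational principle: for any deterministic configuration $(c_e)_{e\in E}\in(0,\infty)^E$, the effective conductance satisfies
\[
c_{\text{eff}}\big((c_e)_{e\in E}\big)=\inf_{f}\ \sum_{\{x,y\}\in E}c_{\{x,y\}}\,\big(f(x)-f(y)\big)^2,
\]
where $f$ ranges over all functions $V\to\R$ with $f(x_0)=1$ and $f(\delta)=0$ (the infimum being attained at the harmonic extension of these boundary values, whose energy is $c_{\text{eff}}\cdot(1-0)^2$). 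For each fixed admissible $f$ the right-hand side is a \emph{linear} functional of $(c_e)_{e\in E}$; hence $c_{\text{eff}}$ is a pointwise infimum of linear functions on the convex set $(0,\infty)^E$, and is therefore concave there. (Equivalently and directly: if $f$ is optimal for the midpoint configuration $\tfrac12(c+c')$, then $c_{\text{eff}}(\tfrac12(c+c'))=\tfrac12\mathcal{E}_c(f)+\tfrac12\mathcal{E}_{c'}(f)\ge\tfrac12 c_{\text{eff}}(c)+\tfrac12 c_{\text{eff}}(c')$, using that $f$ is suboptimal for $c$ and for $c'$.)

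With joint concavity in hand, I would finish by applying Jensen's inequality to the concave function $c_{\text{eff}}$ and the random vector $(c_e)_{e\in E}$; all relevant expectations are finite because the graph is finite and $0\le c_{\text{eff}}\le\sum_{e\in E}c_e$. This gives
\[
\E\big(c_{\text{eff}}\big)=\E\Big(c_{\text{eff}}\big((c_e)_{e\in E}\big)\Big)\le c_{\text{eff}}\Big(\big(\E(c_e)\big)_{e\in E}\Big)=\overline{c}_{\text{eff}},
\]
which is the asserted inequality.

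The only point that genuinely needs justification is the joint concavity of $c_{\text{eff}}$ in $(c_e)_{e\in E}$, and once the Dirichlet principle is available this is immediate, so I do not expect a real obstacle. A more computational route would be to express $c_{\text{eff}}$ as a ratio of weighted spanning-tree and spanning-two-forest polynomials via the matrix--tree theorem (this is the combinatorial form of the Schur-complement identity used in Remark \ref{rmq}) and verify concavity of that ratio directly, or to reduce via series--parallel and star--triangle transformations to one edge conductance at a time; but the former is considerably heavier, and the latter only yields separate concavity in each coordinate, which is not enough for Jensen applied to the full random vector. Hence the variational argument, giving joint concavity in one stroke, is the one I would use.
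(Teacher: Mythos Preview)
Your proof is correct and is essentially the paper's argument: the paper also uses the Dirichlet principle, plugging the deterministic minimizer $\overline{V}$ for the averaged conductances $(\overline{c}_e)$ as a test function in the random energy, then taking expectation. You have just packaged this same step as ``concavity of an infimum of linear functionals'' followed by Jensen, which is the abstract formulation of the identical inequality.
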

\begin{proof}
Let $(V_x)_{x\in V}$ be the (random) potential with $V_{x_0}=1$ and $V_{\delta}=0$ that minimizes the energy:
\[
\mathcal{E}:=\frac{1}{2}\sum\limits_{\{x,y\}\in E} c_e(V_x-V_y)^2.
\]
This potential is harmonic on $V\backslash \{x_0,\delta\}$ by the Dirichlet principle and therefore $(V_x-V_y)_{(x,y)\in E}$ is the flow that minimizes the energy and we get: 
\[
\mathcal{E}:=\frac{1}{2} c_{\text{eff}}.
\]
Now let $(\overline{V}_x)_{x\in V}$ be the potential with $\overline{V}_{x_0}=1$ and $\overline{V}_{\delta}=0$ that minimizes the energy:
\[
\overline{\mathcal{E}}:=\frac{1}{2}\sum\limits_{\{x,y\}\in E} \overline{c}_e(\overline{V}_x-\overline{V}_y)^2.
\]
We have:
\[
\overline{\mathcal{E}}:=\frac{1}{2} \overline{c}_{\text{eff}}.
\]
Now since $V$ minimizes $\mathcal{E}$, we have:
\[
\mathcal{E}\leq \frac{1}{2}\sum\limits_{\{x,y\}\in E} c_e(\overline{V}_x-\overline{V}_y)^2.
\]
Now, by taking the expectation we get:
\[
\E\left(\mathcal{E}\right)\leq \frac{1}{2}\sum\limits_{\{x,y\}\in E} \E\left(c_e\right)(\overline{V}_x-\overline{V}_y)^2.
\]
Therefore:
\[
\frac{1}{2} \E\left(c_{\text{eff}}\right) \leq \frac{1}{2}\sum\limits_{\{x,y\}\in E_{n+1}} \E\left(c_e\right)(\overline{V}_x-\overline{V}_y)^2.
\]
Then we get:
\[
\frac{1}{2} \E\left(c_{\text{eff}}\right) \leq \frac{1}{2} \overline{c}_{\text{eff}}.
\]
And therefore:
\[
\E\left(c_{\text{eff}}\right) \leq \overline{c}_{\text{eff}}.
\]
\end{proof}

\begin{prop}\label{lem:EffWeight}
Let $\mathcal{G}=(V,E)$ be a finite graph and $x_0,\delta \in V$ two distinct vertices. Let $(W_e)_{e}$ be a family of random (not necessarily independent) positive weights. Let $w^{\text{eff}}$ be the (random) effective weight between $x_0$ and $\delta$ for the VRJP with initial weights $(W_e)_{e\in E}$. Let $c^{\text{eff}}$ be the effective conductance between $x_0$ and $\delta$ if we set conductances $(c_e)_{e\in E}$ defined by $c_e:=\E\left(W_e\right)$ on $\mathcal{G}$. We have the following inequality:
\[
\E\left(w_{\text{eff}}\right)\leq c_{\text{eff}}.
\]
\end{prop}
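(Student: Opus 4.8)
The plan is to prove the inequality by induction on the number $m:=|V\setminus\{x_0,\delta\}|$ of interior vertices, each inductive step being one step of the ``star--mesh'' ($Y$--$\Delta$) transform in the $\beta$-field picture, read off from the conditioning rule of Proposition~\ref{lem:VRJPNu0}. It is convenient first to note that loops are harmless: adding a diagonal term $W(i,i)$ to a symmetric weight matrix $W$ and translating $\beta_i\mapsto\beta_i-W(i,i)/2$ leaves $H_\beta$ unchanged, hence changes neither the law of $H_\beta$ (so neither the law of $w^{\text{eff}}$) nor the effective conductance; we will silently delete loops whenever they appear. The base case $m=0$ is immediate: on the two--vertex graph $\{x_0,\delta\}$ one computes from the $2\times2$ matrix $H_\beta$ that $w^{\text{eff}}_{x_0,\delta}=W_{\{x_0,\delta\}}$ deterministically, so $\E(w^{\text{eff}})=\E(W_{\{x_0,\delta\}})=c^{\text{eff}}$.

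For the inductive step pick any $v\in V\setminus\{x_0,\delta\}$ and condition on the random weights $(W_e)_{e\in E}$ and on $\beta_v$. By Proposition~\ref{lem:VRJPNu0}, applied with the singleton $\{v\}$ as first block, the conditional law of $(\beta_u)_{u\ne v}$ is then $\nu_{m+1}^{\check W,0}$ on the graph $\mathcal{G}\setminus v$, where $\mathcal{G}\setminus v$ has vertex set $V\setminus\{v\}$ and edge set consisting of the edges of $\mathcal{G}$ not meeting $v$ together with all pairs of former neighbours of $v$, and
\[
\check W_{\{u,u'\}}=W_{\{u,u'\}}+\frac{W_{\{u,v\}}\,W_{\{u',v\}}}{2\beta_v}.
\]
On the other hand, by Remark~\ref{rmq} (i.e.\ Lemma~\ref{lem:Schur}) the effective weight $w^{\text{eff}}_{x_0,\delta}$ is minus the $(x_0,\delta)$--entry of the Schur complement of $H_\beta$ onto $\{x_0,\delta\}$; eliminating the interior vertices in two stages and using associativity of the Schur complement --- eliminating $v$ first turns $H_\beta$ exactly into the $\check W$--matrix $H_{\check\beta}$ on $V\setminus v$ --- shows that $w^{\text{eff}}_{x_0,\delta}$ computed on $(\mathcal{G},W)$ coincides with $w^{\text{eff}}_{x_0,\delta}$ computed on $(\mathcal{G}\setminus v,\check W)$. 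Since $\mathcal{G}\setminus v$ has $m-1$ interior vertices, the induction hypothesis (applied on the probability space conditioned on $(W_e)_{e\in E}$, with the random weights $\check W$, loops deleted) gives
\[
\E\bigl(w^{\text{eff}}_{x_0,\delta}\mid (W_e)_{e\in E}\bigr)\le \widehat c^{\text{eff}},
\]
where $\widehat c^{\text{eff}}$ is the effective conductance of $\mathcal{G}\setminus v$ with conductances $\E(\check W_{\{u,u'\}}\mid (W_e)_{e\in E})$.

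It remains to identify $\widehat c^{\text{eff}}$. Conditionally on $(W_e)_{e\in E}$ the marginal law of $\beta_v$ is $\nu_1^{0,\eta_v}$ with $\eta_v=\sum_{u\sim v}W_{\{u,v\}}$ (again Proposition~\ref{lem:VRJPNu0}), and for this one--dimensional law one has $\E(1/(2\beta_v))=1/\eta_v$ --- after the substitution $x=2\beta_v$ this is a reciprocal inverse--Gaussian integral, equivalently a $K_{1/2}$ Bessel identity. Hence $\E(\check W_{\{u,u'\}}\mid (W_e)_{e\in E})=W_{\{u,u'\}}+W_{\{u,v\}}W_{\{u',v\}}/\sum_{w\sim v}W_{\{w,v\}}$, which is precisely the \emph{deterministic} star--mesh transform of the network $(\mathcal{G},(W_e)_{e\in E})$ at $v$; that transform preserves effective conductances between the remaining vertices, so $\widehat c^{\text{eff}}$ equals the effective conductance of $\mathcal{G}$ with conductances $(W_e)_{e\in E}$. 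Taking expectations over $(W_e)_{e\in E}$ and invoking Lemma~\ref{lem:conduc} (with the $(W_e)_{e\in E}$ themselves as the random conductances) gives
\[
\E\bigl(w^{\text{eff}}\bigr)\le\E\bigl(c^{\text{eff}}\text{ of }\mathcal{G}\text{ with conductances }(W_e)_{e\in E}\bigr)\le c^{\text{eff}},
\]
which closes the induction and proves the proposition.

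The crux of the argument is the exact first--moment identity $\E(1/(2\beta_v))=1/\sum_{u\sim v}W_{\{u,v\}}$: it is this equality that makes the conditional mean of the random $\beta$-field star--mesh coincide with the genuine electrical star--mesh, after which the concavity of the effective conductance encapsulated in Lemma~\ref{lem:conduc} supplies the inequality. The remaining difficulties are bookkeeping: keeping track of the loops and of the new edges created among the former neighbours of $v$ at each elimination, and arranging all conditionings so that Proposition~\ref{lem:VRJPNu0} applies with $\eta=0$.
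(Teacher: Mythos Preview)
Your proof is correct and follows essentially the same route as the paper: induction on the number of interior vertices, eliminating one vertex $v$ at a time via the Schur complement/Proposition~\ref{lem:VRJPNu0}, using the key identity $\E(1/(2\beta_v))=1/\sum_{u\sim v}W_{\{u,v\}}$ to match the conditional mean of $\check W$ with the deterministic star--mesh transform, and closing with Lemma~\ref{lem:conduc}. Your write-up is in fact a bit more careful than the paper's about the conditioning on the random weights and about the loops created by the star--mesh step, but the argument is the same.
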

\begin{proof}
We will show the result by induction on the number of vertices of the graph. If the graph has two vertices $\{x_0,\delta\}$ (and therefore only one edge) the result is obvious.\\
Now we assume that the result is true for all graphs with $n$ vertices or less, we will show it for any graph with $n+1$ vertices.\\
Let $\mathcal{G}_{n+1}=(V_{n+1},E_{n+1})$ be a finite graph with exactly $n+1$ vertices, including $x_0$ and $\delta$. Let $(W^{n+1}_e)_{e\in E_{n+1}}$ be random weights on $E_{n+1}$. Let $H_{\beta}$ be a random matrix distributed according to $\tilde{\nu}_n^{W,0}$. Let $w^{\text{eff}}_{n+1}$ be the (random) effective weight between $x_0$ and $\delta$. Let $(c^{n+1}_e)_{e\in E_{n+1}}$ be deterministic conductances defined by $c^{n+1}_e=\E\left(W^{n+1}_e\right)$. We define two effective conductances between $x_0$ and $\delta$ on $\mathcal{G}_{n+1}$: one for random conductances $W^{n+1}$ ($\overline{c}_{n+1}^{\text{eff}}$) and the other for deterministic conductances $(c_e)_{e\in E}$ ($c_{n+1}^{\text{eff}}$). By lemma \ref{lem:conduc}: 
\begin{equation}
\E\left(\overline{c}_{n+1}^{\text{eff}}\right)\leq\E\left(\overline{c}_{n+1}^{\text{eff}}\right).
\end{equation} 
Now, let $y\in V_{n+1}$ be a vertex that is neither $x_0$ nor $\delta$.
Let $\mathcal{G}^y_{n}=(V^y_{n},E^y_{n})$ be the complete graph with $n$ elements with $V^y_{n}=V_{n+1}\backslash \{y\}$. We can decompose $V_{n+1}$ in $V^y_n$ and $\{y\}$, the corresponding decomposition of $H_{\beta}$ is given by:
\[
H_{\beta}:=\left(\begin{matrix} H_{\beta}^{V_n} & -W^{V_1,y} \\ - \T{W}^{V_1,y} & 2\beta_y\end{matrix}\right).
\]
By lemma \ref{lem:Schur}, $w^{\text{eff}}_{n+1}$ knowing $H_{\beta}$ is equal to the effective weight $w^{\text{eff}}_{n}$ on the graph $\mathcal{G}^y_{n}$ for weights and the $\beta$-field given by the matrix $H_{\beta}^{V^y_n} - \frac{1}{2\beta_y} W^{V^y_1,y} \T{W}^{V^y_1,y}$. This matrix, knowing $\beta_y$ and $W^{n+1}$ is distributed according to $\nu_n^{W^{\prime}}$ with $W^{\prime}_{x_1,x_2}=W^{n+1}_{x_1,x_2}+\frac{W^{n+1}_{x_1,y}W^{n+1}_{y,x_2}}{2\beta_y}$. By \ref{lem:VRJPNu0}, if $K_y:=\sum\limits_{x,\{x,y\}\in E_{n+1}} W^{n+1}_{y,x}$ the expectation of $\frac{1}{2\beta_y}$, knowing $W^{n+1}$ is given by:
\[
\begin{aligned}
\E\left(\frac{1}{2\beta_y}\right)
=& \int\limits_{b=0}^{\infty}\frac{1}{2b}\sqrt{\frac{2}{\pi}}\frac{1}{\sqrt{2b}}\exp\left(-\frac{1}{2}\left(2b + \frac{K_y^2}{2b}-2K_y\right)\right)\dd b\\
=& \int\limits_{b=0}^{\infty} \frac{b}{2}\sqrt{\frac{2}{\pi}}\sqrt{\frac{b}{2}}\exp\left(-\frac{1}{2}\left(\frac{2}{b} + \frac{K_y^2 b}{2}-2K_y\right)\right) \frac{1}{b^2}\dd b \\
=& \frac{1}{2}\int\limits_{b=0}^{\infty}\frac{1}{\sqrt{2b}}\sqrt{\frac{2}{\pi}}\exp\left(-\frac{1}{2}\left(\frac{4}{2b} + \frac{K_y^2}{4}2b-2K_y\right)\right) \dd b\\
=& \frac{1}{K_y}\int\limits_{b=0}^{\infty}\frac{1}{\sqrt{2b}}\sqrt{\frac{2}{\pi}}\exp\left(-\frac{1}{2}\left(\frac{K_y^2}{2b} + 2b-2K_y\right)\right) \dd b\\
=& \frac{1}{K_y} \text{by definition of }\nu_1^{0,K_y}.
\end{aligned}
\]
Therefore for any $x_1,x_2\in V^y_n$:
\[
\E\left(W^{\prime}_{x_1,x_2}|W^{n+1}\right)=W^{n+1}_{x_1,x_2}+\frac{W^{n+1}_{x_1,y}W^{n+1}_{y,x_2}}{\sum\limits_x W^{n+1}_{y,x}}.
\]
Similarly the effective conductance $\overline{c}^{\text{eff}}_{n+1}$ between $x_0$ and $\delta$ on $\mathcal{G}_{n+1}$ with conductances $W^{n+1}$ is equal to the effective conductance $\overline{c}^{\text{eff}}_{n}$ between $x_0$ and $\delta$ on $\mathcal{G}^y_{n}$ with conductances $\overline{c}^{\prime}_{x_1,x_2}:=W^{n+1}_{x_1,x_2}+\frac{W^{n+1}_{x_1,y}W^{n+1}_{y,x_2}}{\sum\limits_x W_{y,x}}$.
This means that, for any $e\in E^y_{n}$:
\[
\E\left(W_e^{\prime}|W^{n+1}\right)=\overline{c}^{\prime}_e,
\] 
so by the induction property:
\[
\E\left(w^{\text{eff}}_{n}\right)\leq\E\left(\overline{c}^{\text{eff}}_{n}\right),
\]
which implies that
\[
\E\left(w^{\text{eff}}_{n+1}\right)\leq\E\left(c^{\text{eff}}_{n+1}\right).
\]
\end{proof}

\subsection{proof of theorem \ref{theo:CondEff}}

\begin{proof}
Once we can compare the effective weight for the VRJP to effective conductance for an electrical network, the proof is quite straightforward. Let $\tilde{W}_e$ be weights and let $c_e:=\E(\tilde{W}_e)$ be conductances. For any $n>0$ we define $\overline{S}_n$ the vertices of $V$ at distance $n$ or more of $x_0$. Then $\mathcal{G}_n,\tilde{W}^n$ (with $\mathcal{G}_n:=(V_n,E_n)$) is the quotient of the weighted graph $\mathcal{G},\tilde{W}$ by $\overline{S}_n$ and $\delta_n$ is the point obtained by fusing all points of $\overline{S}_n$ into one. For any $n$, let $H_n$ be distributed according to $\tilde{\nu}_{|V_n|}^{\tilde{W}^n,0}$ and let $G_n$ be its inverse. By Theorem 1 of $\cite{VRJPNu}$, to show that the VRJP with initial weights $\tilde{W}_e$ is recurrent, we only need to show that $\frac{G_n(x_0,\delta_n)}{G_n(\delta_n,\delta_n)}$. By remark \ref{rmq}, the law of $\frac{G_n(x_0,\delta_n)}{G_n(\delta_n,\delta_n)}$ is entirely determined by the law of the effective weight. Since the effective conductive converges to $0$, the effective weights converges to $0$ in probability by lemma \ref{lem:EffWeight}. Then, by remark \ref{rmq}, the law of 
$\frac{G_n(x_0,\delta_n)}{G_n(\delta_n,\delta_n)}$ knowing the effective weight is the same as if the graph had only two points: $x_0$ and $\delta$ with a weight equal to the effective weight between them. Now let $(\beta_1,\beta_2)$ be distributed according to $\nu_2^{w^{\text{eff}},0}$, the law of $\frac{G_n(x_0,\delta_n)}{G_n(\delta_n,\delta_n)}$ is the same as the law of 
\[
\frac{\frac{w^{\text{eff}}}{4\beta_1\beta_2-(w^{\text{eff})^2}}}{\frac{2\beta_1}{4\beta_1\beta_2-(w^{\text{eff})^2}}}
=\frac{w^{\text{eff}}}{2\beta_1}.
\] 
By taking $\lambda=1$ in lemma \ref{lem:coupling}, we get that 
\[
\frac{w^{\text{eff}}}{2\beta_1}=\frac{w^{\text{eff}}}{W^{\text{eff}\frac{1}{z}}}=Z,
\]
where the law of $Z$ (knowing $w^{\text{eff}}$) is given by:
\[
\sqrt{\frac{w^{\text{eff}}}{2\pi}}\frac{1}{z\sqrt{z}}\exp\left(-\frac{w^{\text{eff}}}{2}\left(\sqrt{z}-\frac{1}{\sqrt{z}}\right)^2\right) 1_{z>0} \dd z.
\]
If $w^{\text{eff}}$ goes to $0$ then $Z$ converges to $0$ in probability and therefore $\frac{G_n(x_0,\delta_n)}{G_n(\delta_n,\delta_n)}$ converges to $0$ in probability and we get the result we want.
\end{proof}

\section{Acknowledgement}

I would like to thank Christophe Sabot, my Phd advisor, for suggesting working in this direction and helpful discussions. I would also like to thank Bruno Schapira for suggesting that our main result could be used to show recurrence of the VRJP on recurrent graphs.

\bibliographystyle{abbrv}
\bibliography{biblio}

\begin{thebibliography}{10}

\bibitem{renfexp}
O.~Angel, N.~Crawford, and G.~Kozma.
\newblock Localization for linearly edge reinforced random walks.
\newblock {\em Duke Math. J.}, 163(5):889--921, 2014.

\bibitem{linreinforced}
D.~Coppersmith and P.~Diaconis.
\newblock Random walk with reinforcement.
\newblock Unpublished manuscript, 1986.

\bibitem{VRJPIntro}
B.~Davis and S.~Volkov.
\newblock Vertex-reinforced jump process on trees and finite graphs.
\newblock {\em Probab.Theory Related Fields}, 128(1):42--62, 2004.

\bibitem{ERRWMixture}
P.~Diaconis.
\newblock Recent progress on de finetti's theorem for markov chains.
\newblock In {\em Bayesian statistics, 3 (Valencia, 1987)}, Oxford Sci. Publ.,
  pages 111--125. Oxford Univ. Press, New York, 1988.

\bibitem{ERRWTrans}
M.~Disertori, C.~Sabot, and P.~Tarr\`es.
\newblock Transience of edge-reinforced random walk.
\newblock {\em Commun. Math. Phys.}, 339(1):121--148, 2015.

\bibitem{SUSY3}
M.~Disertori and T.~Spencer.
\newblock Anderson localization for a supersymmetric sigma model.
\newblock {\em Comm. Math. Phys}, 300(3):659--671, 2010.

\bibitem{SUSY2}
M.~Disertori, T.~Spencer, and M.~Zirnbauer.
\newblock Quasi-diffusion in a 3d supersymmetric hyperbolic sigma model.
\newblock {\em Comm. Math. Phys}, 300(2):435--486, 2010.

\bibitem{Letac}
G.~Letac and J.~Weso\l{}owski.
\newblock Multivariate reciprocical inverse gaussian distributions from the
  sabot-tarr\`es-zeng integral.
\newblock {\em Preprint:1709.04843}, 2017.

\bibitem{ERRWLadder}
F.~Merkl and S.~Rolles.
\newblock Edge-reinforced random walk on a ladder.
\newblock {\em The Annals of Probability}, 33(6):2051--2093, 2005.

\bibitem{ERRWMixture2}
F.~Merkl and S.~Rolles.
\newblock A random environment for linearly edge-reinforced random walks on
  infinite graphs.
\newblock {\em Probab.Theory Relat.Fields}, 138:157--176, 2007.

\bibitem{lerrwrec2d}
F.~Merkl and S.~Rolles.
\newblock Recurrence of edge-reinforced random walk on a two-dimensional graph.
\newblock {\em Ann. Probab.}, 37(5):1679--1714, 2009.

\bibitem{Pemantle}
R.~Pemantle.
\newblock Phase transition in reinforced random walk and rwre on trees.
\newblock {\em Ann.Probab}, 16(3):1229--1241, 1988.

\bibitem{ERRWLadder2}
S.~Rolles.
\newblock On the recurrence of edge-reinforced random walk on $\mathbf{Z}\times
  g$.
\newblock {\em Probab.Theory Relat.Fields}, 135(2):216--264, 2006.

\bibitem{RecVRJPZ2}
C.~Sabot.
\newblock Polynomial localization of the 2d-vertex reinforced jump process.
\newblock {\em Preprint:1907.07949}, 2019.

\bibitem{ERRWVRJP}
C.~Sabot and P.~Tarr{\`e}s.
\newblock Edge-reinforced random walk, vertex-reinforced jump process and the
  supersymmetric hyperbolic sigma model.
\newblock {\em J.Eur.Math.Soc}, 17(9):2353--2378, 2015.

\bibitem{VRJPNu0}
C.~Sabot, P.~Tarr\`es, and X.~Zeng.
\newblock The vertex reinforced jump process and a random schr{\"o}dinger
  operator on finite graphs.
\newblock {\em Ann.Probab}, 45(2):3967--3986, 2017.

\bibitem{VRJPNu}
C.~Sabot and X.~Zeng.
\newblock A random schr{\"o}dinger operator associated with the vertex
  reinforced jump process on infinite graphs.
\newblock {\em J. Amer. Math. Soc.}, 32(2):311--349, 2019.

\bibitem{SUSY1}
M.~Zirnbauer.
\newblock Fourier analysis on a hyperbolic supermanifold with constant
  curvature.
\newblock {\em Comm. Math. Phys}, 141(3):503--522, 1991.

\end{thebibliography}

\end{document}